\newtheorem{theorem}{Theorem}
\numberwithin{theorem}{section}% [if desired]
\theoremstyle{definition}
\theoremstyle{definition}\newtheorem{definition}[theorem]{Definition}
\theoremstyle{definition}
\theoremstyle{definition}\newtheorem{example}[theorem]{Example}
\theoremstyle{definition}
\theoremstyle{definition}
\theoremstyle{definition}
\theoremstyle{definition}
\theoremstyle{definition}
\theoremstyle{definition}
\title{A caricature of dilation theory}
\author{B.V. Rajarama Bhat}
\address{Indian Statistical Institute, Stat-Math. Unit, R V College Post, Bengaluru 560059, India}
\email{bhat@isibang.ac.in}
\author{Sandipan De}
\email{444sandipan@gmail.com }
\author{ Narayan Rakshit}
\email{narayan753@gmail.com   }
\keywords{dilation, isometries, injective maps, operator theory}
\subjclass{primary: 47A20;secondary: 04A05.}
\begin{document}

    \begin{abstract}
        We present a set-theoretic version of some basic dilation results of operator theory.  The results we have considered are Wold decomposition, Halmos dilation, Sz. Nagy
        dilation, inter-twining lifting, commuting and non-commuting dilations, BCL theorem etc. We point out some natural generalizations and variations.
    \end{abstract}

    \maketitle

The basic aim of dilation theory of  Hilbert space operators is to realize operators which are a priori not very tractable as compressions of better-behaved operators such as isometries
or unitaries.  This is a very well developed subject with a number of applications (See \cite{SzF}, \cite{Pa}) and is also a motivation for studying dilation of
quantum dynamical semigroups (See \cite{Ar}, \cite{Bh1}). Here  we are obtaining  several dilation theory results
     in a much weaker framework with very little structure. We assume that the reader is familiar with basics of dilation theory
 of operators. So we do not dwell on  explaining usefulness of
these ideas nor do we go into nitty gritty of this subject. The basic reference for the subject is the classic book   ``Harmonic analysis of operators on Hilbert space"
\cite{SzF}. A state of the art exposition provided by Orr Shalit \cite{Sha} is also a good place to begin.  We
quote  the results we are mimicking    without giving proofs.

In our setting Hilbert spaces are replaced by sets and bounded operators by arbitrary functions. Injective maps are analogues of isometries and
bijective maps are analogues of unitaries. Direct sums of Hilbert
spaces would be replaced by disjoint unions of sets. In the first section, we begin with an analogue of Wold decomposition of isometries. We study orbits of
injective maps. Like in Operator theory, unilateral shift is the basic model and this is reflected throughout the article. We look at Halmos dilation of contractions and observe that
dilations have three ingredients, namely an embedding, an operator in the bigger space and then a compression. This motivates our constructions. We have simple analogues of
Halmos dilation and Sz. Nagy dilation. We have a notion of defect space. Co-invariant minimal dilations would be parametrized be these defect spaces. Going further, we have analogues
of intertwining lifting theorem and Sarason's lemma.

In the second section, we look at multivariable theory. We have versions of  commuting dilations (such as Ando dilation \cite{Ando}) and non-commuting dilations (such as Bunce\cite{Bunce}, Frazho\cite{Frazho}, Popescu \cite{Popescu} dilations). In Section 3, we have an interesting analogue of Berger-Coburn-Lebow theorem on
commuting isometries.   In the last Section we describe the possibility of extending our results to  semigroups such as ${\mathbb R}^n$ or even to  general monoids. We observe that it is also  possible to have a dilation theory where Hilbert spaces are replaced by vector spaces, bounded operators by linear maps and isometries  by injective linear maps.

We are calling our presentation a caricature as some features of dilation theory are accentuated whereas some other aspects are totally ignored. For instance, we do talk about minimality and so on. Essentially most algebraic structures have been retained whereas analytical concepts such as norm estimates and inequalities have been filtered out.  Naturally, then we do not seem to have analogues of results like von Neumann inequality. We do not think of this as a drawback. It is more interesting than to have all aspects translated in a bijective way.

We denote natural numbers $\{ 1, 2, 3, \ldots \} $ by ${\mathbb N}$ and non-negative integers  $\{0, 1, 2, \ldots \}$ by ${\mathbb Z}_+$.  Similarly ${ \mathbb R} _+= [0, \infty)$. We would use
the notation $A_0\sqcup A_1 $ to indicate disjoint union and for
$A_0\subseteq A$, we write $A_0^c$ to indicate the complement
$A\backslash A_0$, when the larger set $A$ under consideration is
clear.

\section{Single variable dilation theory}

The first result we wish to consider is the familiar Wold Decomposition of isometries. Let $\mathcal
H$ be a Hilbert space and let $V:\mathcal H\to \mathcal H$ be an
isometry. Take ${\mathcal H}_1 = \bigcap _{n=0}^{\infty }
V^n({\mathcal H})$ and ${\mathcal H}_0 = ({\mathcal H }_1)^\perp .$
Then ${\mathcal H}_0, {\mathcal H}_1$ reduce $V$, so that
$V=V_0\oplus V_1$, where  $V_1=V|_{\mathcal H _1}$ is a unitary and
$V_0=V| _{{\mathcal H}_0}$ is a  {\em shift,\/} that is,
$\bigcap_{n=0}^{\infty } V_0^n({\mathcal H}_0 )= \{ 0\}.$ Recall that a subspace $\mathcal W$ is called a
{\em wandering subspace}  for an isometry $V$, if $V^m({\mathcal W})\bigcap V^n({\mathcal W})= \{ 0\}$ for
$m, n\in {\mathbb Z}_+$ with $m\neq n.$

\begin{theorem} (Wold Decomposition) (\cite{SzF} ) Let $\mathcal H$ be a Hilbert space and let $V:\mathcal H\to
\mathcal H$ be an isometry. Then $\mathcal H$ decomposes uniquely as
$\mathcal H= {\mathcal H}_0 \oplus {\mathcal H}_1$ where ${\mathcal
H}_0$ and ${\mathcal H}_1$ reduce $V$, $V|_{{\mathcal H}_1}$ is a
unitary and $\bigcap _{n=0}^{\infty }V^n ({\mathcal H}_0)= \{0\} .$
\end{theorem}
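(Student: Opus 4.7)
The plan is to define $\mathcal{H}_1 := \bigcap_{n=0}^{\infty} V^n(\mathcal{H})$, set $\mathcal{H}_0 := \mathcal{H}_1^{\perp}$, and then verify the four assertions (closedness of the pieces, the reducing property, unitarity of $V|_{\mathcal{H}_1}$, the shift condition) in this order, finishing with uniqueness. Since $V$ is an isometry, every $V^n(\mathcal{H})$ is a closed subspace, and so $\mathcal{H}_1$ is automatically a closed subspace of $\mathcal{H}$; this legitimizes taking the orthogonal complement.

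The key algebraic step is to check that $V$ maps $\mathcal{H}_1$ \emph{onto} $\mathcal{H}_1$. The containment $V(\mathcal{H}_1) \subseteq \mathcal{H}_1$ is immediate since $Vx \in V^{n+1}(\mathcal{H}) \subseteq V^n(\mathcal{H})$ for each $n$. For surjectivity, given $x \in \mathcal{H}_1$, write $x = V z$ with $z \in \mathcal{H}$ (using $x \in V(\mathcal{H})$), and for each $n \geq 1$ write $x = V^{n+1} y_n$; the injectivity of $V$ forces $z = V^n y_n$, so $z \in V^n(\mathcal{H})$ for every $n$, hence $z \in \mathcal{H}_1$. Thus $V|_{\mathcal{H}_1}$ is a surjective isometry, i.e.\ unitary. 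To see that $\mathcal{H}_0$ reduces $V$, I will check both $V$- and $V^*$-invariance of $\mathcal{H}_0$: if $y \in \mathcal{H}_0$ and $x \in \mathcal{H}_1$, then $\langle V^* y, x\rangle = \langle y, Vx\rangle = 0$ since $Vx \in \mathcal{H}_1$; and $\langle Vy, x\rangle = \langle Vy, V z\rangle = \langle y, z\rangle = 0$ where $z \in \mathcal{H}_1$ satisfies $Vz = x$ (available because $V|_{\mathcal{H}_1}$ is onto).

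The shift property follows by a sandwiching argument: $V^n(\mathcal{H}_0) \subseteq V^n(\mathcal{H})$, and $\mathcal{H}_0$ is $V$-invariant, so any $x \in \bigcap_n V^n(\mathcal{H}_0)$ lies in $\mathcal{H}_0 \cap \bigcap_n V^n(\mathcal{H}) = \mathcal{H}_0 \cap \mathcal{H}_1 = \{0\}$.

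For uniqueness, suppose $\mathcal{H} = \mathcal{K}_0 \oplus \mathcal{K}_1$ is any such decomposition. Since $V|_{\mathcal{K}_1}$ is unitary, $V^n(\mathcal{K}_1) = \mathcal{K}_1$ for all $n$, whence $\mathcal{K}_1 \subseteq \mathcal{H}_1$. Conversely, for $x \in \mathcal{H}_1$, write $x = x_0 + x_1$ with $x_i \in \mathcal{K}_i$; the reducing property of $\mathcal{K}_0, \mathcal{K}_1$ together with $x \in V^n(\mathcal{H})$ lets me recover preimages $y_0 \in \mathcal{K}_0$, $y_1 \in \mathcal{K}_1$ with $V^n y_i = x_i$, so $x_0 \in \bigcap_n V^n(\mathcal{K}_0) = \{0\}$, giving $x = x_1 \in \mathcal{K}_1$. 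The main obstacle here is essentially the onto-claim $V(\mathcal{H}_1) = \mathcal{H}_1$, since it is the only point that exploits both the isometric and invariance hypotheses in a non-formal way; everything else is bookkeeping with orthogonality and the unique decomposition $\mathcal{H} = \mathcal{K}_0 \oplus \mathcal{K}_1$.
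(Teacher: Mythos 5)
Your proof is correct and complete. Note, however, that the paper does not actually prove this statement: Theorem 1.1 is quoted from \cite{SzF} without proof (the authors only prove its set-theoretic analogue, Theorem 1.3). Your construction --- $\mathcal{H}_1 = \bigcap_{n\ge 0} V^n(\mathcal{H})$, $\mathcal{H}_0 = \mathcal{H}_1^{\perp}$ --- is exactly the one the paper sketches in the paragraph preceding the theorem, and you supply the missing verifications: closedness of $V^n(\mathcal{H})$ (isometries have closed range), the surjectivity $V(\mathcal{H}_1)=\mathcal{H}_1$ via injectivity of $V$, the reducing property by testing against $V$- and $V^*$-images, and the uniqueness by trapping the unitary part between $\mathcal{K}_1 \subseteq \mathcal{H}_1$ and $\mathcal{H}_1 \subseteq \mathcal{K}_1$. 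It is worth contrasting this with the route the paper takes for the set-theoretic version: there the shift part is built first from the wandering subset $W = A\setminus v(A)$ as $A_0 = \bigsqcup_n v^n(W)$, and the bijective part is its complement. In the Hilbert space setting that corresponds to taking $\mathcal{W}=\mathcal{H}\ominus V(\mathcal{H})$ and $\mathcal{H}_0=\bigoplus_n V^n(\mathcal{W})$, which has the advantage of exhibiting the shift part explicitly as a unilateral shift of multiplicity $\dim\mathcal{W}$; your intersection-based argument is shorter and delivers uniqueness more directly, but does not by itself identify the model of the pure part.
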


\begin{definition} Let $A$ be a set and let $v:A\to A$ be an injective function.
Then a subset $W$ of $A$ is said to be {\em wandering\/} for $v$ if
$$v^m(W)\bigcap v^n(W)= \emptyset ~~\mbox{for}~~ m\neq n.$$
An injective function $v:A\to A$ is said to be a {\em shift\/} if
$\bigcap _{n=0}^{\infty} v^n(A) = \emptyset .$
\end{definition}

\begin{theorem} Let $A$ be a set and let $v:A\to A$ be an injective
function. Then $A$ decomposes uniquely as $A=A_0\sqcup A_1$ where
$A_0$ and $A_1$ are left invariant by $v$ and $v|_{A_0}$ is a shift
and $v|_{A_1}$ is a bijection.
\end{theorem}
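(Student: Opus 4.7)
The plan is to mirror the Hilbert space proof verbatim, with ``intersection of ranges'' replacing ``intersection of images of subspaces.'' Set
\[
A_1 = \bigcap_{n=0}^{\infty} v^n(A), \qquad A_0 = A \setminus A_1.
\]
This is clearly a disjoint union, so the whole burden is to check the four assertions: both pieces are $v$-invariant, $v|_{A_1}$ is bijective, $v|_{A_0}$ is a shift, and the decomposition is unique.

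First I would verify $v(A_1) \subseteq A_1$: if $x \in v^n(A)$ for every $n \geq 0$, then $v(x) \in v^{n+1}(A) \subseteq v^n(A)$ for every $n$. For bijectivity of $v|_{A_1}$, injectivity is free, and for surjectivity I would take $y \in A_1$ and use that, for each $n \geq 1$, $y = v(x_n)$ with $x_n \in v^{n-1}(A)$; the injectivity of $v$ forces all $x_n$ to coincide with a single $x$, and then $x \in \bigcap_{n \geq 0} v^n(A) = A_1$ with $v(x) = y$. For invariance of $A_0$, I would argue by contradiction: if $x \in A_0$ but $v(x) \in A_1$, then for every $n$ there is $y_n \in v^n(A)$ with $v(x) = v(y_n)$, so injectivity gives $x = y_n \in v^n(A)$ for all $n$, contradicting $x \notin A_1$. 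Finally $v|_{A_0}$ is a shift since
\[
\bigcap_{n=0}^{\infty} (v|_{A_0})^n(A_0) \subseteq A_0 \cap \bigcap_{n=0}^{\infty} v^n(A) = A_0 \cap A_1 = \emptyset.
\]

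For uniqueness, suppose $A = B_0 \sqcup B_1$ is any decomposition with the stated properties. Since $v|_{B_1}$ is a bijection, $v^n(B_1) = B_1$ for all $n$, and $v$-invariance of $B_0$ gives $v^n(A) = v^n(B_0) \sqcup B_1$. Intersecting over $n$, and using that $\bigcap_n v^n(B_0) = \emptyset$ because $v|_{B_0}$ is a shift, yields $A_1 = B_1$, hence $A_0 = B_0$.

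The only step that needs a little care rather than being automatic is the surjectivity of $v|_{A_1}$: one must resist the temptation to pick $x \in v^{-1}(y)$ directly and instead use the preimages coming from each $v^n(A)$ together with injectivity. Everything else is a routine translation in which the subtraction of orthogonal complements in the Hilbert space proof is replaced by ordinary set-theoretic complementation, so I do not anticipate any genuine obstacle.
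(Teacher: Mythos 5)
Your proof is correct, but it takes a different route from the paper's. You define the decomposition by setting $A_1=\bigcap_{n=0}^{\infty}v^n(A)$ and $A_0=A\setminus A_1$, the verbatim translation of the Hilbert-space statement, and then verify invariance, bijectivity, the shift property and uniqueness by direct injectivity arguments; your uniqueness step rests on the set identity $\bigcap_n\left(X_n\cup Y\right)=\left(\bigcap_n X_n\right)\cup Y$ applied to $v^n(A)=v^n(B_0)\sqcup B_1$, and your care about the surjectivity of $v|_{A_1}$ (using the preimages supplied by each $v^n(A)$ rather than an unavailable $v^{-1}$) is exactly the one point that genuinely needs an argument. The paper instead starts from the wandering set $W=A\setminus v(A)$, proves $W$ is wandering, defines $A_0=\sqcup_{n\geq 0}v^n(W)$ and $A_1=A_0^c$, and proves uniqueness by showing any admissible decomposition produces the same $W$. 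The two definitions agree because the ranges $v^n(A)$ are nested, so $A\setminus\sqcup_n v^n(W)=\bigcap_n v^n(A)$. What each buys: your version makes the four verifications cleaner and more mechanical, while the paper's version exhibits the explicit structure $A_0=\sqcup_n v^n(W)$ (a shift of multiplicity $W$), which the paper reuses immediately afterwards for the orbit classification and again in the Berger--Coburn--Lebow section; the paper is, however, rather terse about why $v|_{A_1}$ is surjective, a point your argument settles explicitly.
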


\begin{proof} Take $W= A\backslash v(A)$. If $a\in v^m(W)\bigcap v^{m+k}(W)$
for some $m, k\in {\mathbb Z}_+$, we get $ a=v^m (w_1)=
v^{m+k}(w_2)$ for some $w_1, w_2\in W.$ As $v^m$ is injective, we
get $w_1= v^k(w_2).$ But this is not possible unless $k=0$ as
$w_1\in A\backslash v(A).$ This shows that $W$ is a wandering subset
for $v$. Now take $A_0= \sqcup _{n=0}^{\infty}v^n(W)$ and $A_1=
A_0^c.$ Then $A= A_0\sqcup A_1$. Clearly $v|_{A_0}$ is a shift
and $v|_{A_1}$ is a bijection.

Suppose that $A = A_0^{\prime} \sqcup A_1^{\prime}$ such that $v$ leaves $A_0^{\prime}$ and $A_1^{\prime}$ invariant and $v|_{A_0^{\prime}}$ is a shift and $v|_{A_1^{\prime}}$	is a bijection. Then $A_0^{\prime} = \sqcup_{n = 0}^{\infty}
(v|_{A_0^{\prime}})^n W^{\prime}$ where $W^{\prime} = A_0^{\prime} \setminus v(A_0^{\prime})$. Thus we have that
\begin{align*}
W &= A_0 \setminus v(A_0) = (A_0 \sqcup A_1) \setminus (v(A_0) \sqcup A_1) = A \setminus v(A) \\
&= (A_0^{\prime} \sqcup A_1^{\prime}) \setminus (v(A_0^{\prime}) \sqcup A_1^{\prime}) = A_0^{\prime} \setminus v(A_0^{\prime}) = W^{\prime}.
\end{align*}
Consequently, $A_0 = \sqcup_{n = 0}^{\infty}
v^n (W) = \sqcup_{n = 0}^{\infty}
v^n (W^{\prime}) = A_0^{\prime}$ and hence, $A_1 = A_1^{\prime}$.

\end{proof}

From the Wold decomposition, an isometry on a Hilbert space decomposes as a direct sum of
a pure isometry and a unitary. The pure isometry is just the unilateral shift in $l^2({\mathbb Z}_+)$ with some
multiplicity, whereas the unitary part is understood using the spectral theory. We can carry out a
similar analysis for injective functions. Suppose $v:A\to A$ is an injective function. For
$a,b$ in $A$, write $a\sim b$ if either $a=v^n(b)$ or $v^n(a)=b$ for some $n\in {\mathbb Z}_+.$
Clearly this defines an equivalence relation on $A$. The corresponding equivalence classes are known
as orbits of $v$.

We first classify injective maps which have exactly one orbit. In that direction consider the following examples.

(i)  {\em Cyclic permutations :\/} For $d\in {\mathbb N}$, take ${\mathbb Z}_d =\{0, 1, 2, \ldots , d-1\}$ with addition modulo $d.$ Define $s_d: {\mathbb Z}_d\to {\mathbb Z}_d$ by $s_d(n)=n+1~~\mbox{modulo}~d.$ (Note: ${\mathbb Z}_1=\{0\}$ and $s_0(0)=0.$). Then $s_d$ is bijective and has
single orbit.

(ii) {\em Bilateral translation: \/} Define $s: {\mathbb Z}\to {\mathbb Z}$ by $s(n)=n+1$. Then $s$ is bijective and has single orbit. (We refrain from calling this as a shift as $\bigcap _ns^n({\mathbb Z})\neq \emptyset .$)

(iii) {\em Unilateral translation/shift:\/} Define $s_+: {\mathbb Z}_+\to {\mathbb Z}_+$ by $s_+(n)=n+1.$
Then $s_+$ is a shift with single orbit.

A little bit of thought shows that any injective map with single orbit has to be in bijective correspondence with exactly one of these examples. If $X$ is any set, we define $1_X\times s: X\times {\mathbb Z}\to X\times {\mathbb Z}$ by
$$(1_X\times s)(x,n)= (x, n+1).$$
This will be called as the bilateral translation with multiplicity $X$. In a similar way, we  define $1_X\times s_d$  ($d$-cyclic permutation with
multiplicity $X$) on  $X\times {\mathbb Z}_d$ and $1\times s_+$ ( unilateral shift with multiplicity $X$) on $X\times {\mathbb Z}_+.$

Now suppose $v:A\to A$ is injective. A little bit of thought shows that  the action of $v$ on any of the orbits is in bijective correspondence with one of the examples above. Decomposing $A$ into equivalence classes we see that $(A,v)$ is in bijective correspondence
with $ (B,w)$ where $B= \sqcup _{d\in {\mathbb N}}(X_d\times {\mathbb Z}_d )\sqcup X\times {\mathbb Z}\sqcup X_+\times {\mathbb Z}_+$,
with suitable multiplicity spaces $\{X_d\}_d\in {\mathbb N}, X, X_{{\mathbb Z}_+}$ (some of these terms could be absent)
and $w$ is equal to $1\times s_d$ or $1\times s$ or $1\times s_+$ in respective spaces. Cardinalities of these multiplicity spaces are uniquely determined.

   The most basic dilation theorem for contraction operators on
Hilbert spaces is the following.
\begin{theorem} (Halmos Dilation \cite{Hal},   \cite{Sha}) Let $T$ be a
contraction $(\|T \|\leq 1)$ on a Hilbert space $\mathcal H .$ Then
$U: { \mathcal H }\oplus {\mathcal H} \to {\mathcal H}\oplus
{\mathcal H}$ defined by $$ U= \left[
\begin{array}{cc}
    T&  D_{T^*}\\
  D_T  & -T^*\\
    \end{array}
    \right]$$
 where $D_T= (I-T^*T)^{\frac{1}{2}}$ and $D_{T^*}=(I-TT^*)^{\frac{1}{2}}$, is a unitary.
\end{theorem}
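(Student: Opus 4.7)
The plan is a direct verification that $U^*U = I_{\mathcal{H}\oplus\mathcal{H}} = UU^*$, using standard properties of the defect operators. First I would record the two algebraic facts that do all the work: $D_T^2 = I - T^*T$ and $D_{T^*}^2 = I - TT^*$ by definition, and the intertwining identity
\[
T\, D_T = D_{T^*}\, T, \qquad D_T\, T^* = T^*\, D_{T^*}.
\]
The second pair is the only substantive step, and it is the one I would expect to be the main (mild) obstacle to record carefully.

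The intertwining identity follows from the polynomial identity $T(T^*T)^n = (TT^*)^n T$ for every $n \in \mathbb{Z}_+$, so that $T\, p(T^*T) = p(TT^*)\, T$ for every polynomial $p$. Since $T$ is a contraction, the spectra of $T^*T$ and $TT^*$ lie in $[0,1]$, and the continuous function $f(x) = \sqrt{1-x}$ can be approximated uniformly on $[0,1]$ by polynomials (Weierstrass). Passing to the limit in norm on both sides of $T\, p(T^*T) = p(TT^*)\, T$ gives $T\, f(T^*T) = f(TT^*)\, T$, i.e.\ $T D_T = D_{T^*} T$; taking adjoints yields $D_T T^* = T^* D_{T^*}$.

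Next I would write out the block matrix products. Since
\[
U^* = \begin{bmatrix} T^* & D_T \\ D_{T^*} & -T \end{bmatrix},
\]
a direct computation gives
\[
U^* U = \begin{bmatrix} T^*T + D_T^2 & T^* D_{T^*} - D_T T^* \\ D_{T^*} T - T D_T & D_{T^*}^2 + T T^* \end{bmatrix}.
\]
The diagonal entries are $T^*T + (I - T^*T) = I$ and $(I - TT^*) + TT^* = I$, while the intertwining identity makes both off-diagonal entries vanish. Hence $U^*U = I$. The computation of $UU^*$ is entirely analogous, with the roles of $T$ and $T^*$ swapped, and yields $UU^* = I$. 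Therefore $U$ is unitary.

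The only non-routine input is the intertwining of $T$ with the defect operators via the functional calculus; everything else is bookkeeping in a $2\times 2$ block matrix. No other dilation-theoretic machinery is needed at this stage.
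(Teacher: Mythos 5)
Your verification is correct: the block computation of $U^*U$ and $UU^*$ is right, and the intertwining relations $TD_T = D_{T^*}T$ and $D_TT^* = T^*D_{T^*}$ are properly justified via $T(T^*T)^n=(TT^*)^nT$ and Weierstrass approximation of $\sqrt{1-x}$ on $[0,1]$. Note that the paper itself offers no proof of this theorem --- it is quoted from the literature as one of the results being mimicked --- but your argument is the standard one (going back to Halmos) and is complete.
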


     In other words, every contraction can be enlarged or dilated to a
unitary on a larger space.  If we identify $\mathcal H$
with $\mathcal H \oplus 0$ in $\mathcal H \oplus \mathcal H $, the
action of $U$ on $\mathcal H$ restricted to $\mathcal H$ is same as
the action by $T$, that is,
$$T = P_{\mathcal H}U|_{\mathcal H},$$

    We observe that this dilation has three components to it. First, there
    is the embedding of the original space  in a larger space.
    Then there is a `good' map in the larger space, which is usually called as the dilation. This is followed
    by a compression (or projection) to the range of the original space in the larger space. The last map is usually an idempotent. In probability theory
    contexts this map is a conditional expectation map. This is true of most dilation theory results.

Here is a simple Halmos type dilation for functions. Let $h:A\to A$
be a function. Now we want a bijection (analogue of unitary) on two copies of $A$, that is, on $B =
A\times \{0,1\}$, which could be called as dilation.
Indeed define $i: A\to B$ by $i(a) = (a,0)$ and $u:B\to B$ by $u(a,
m)= (a, 1-m), ~~(a,m)\in B$. Then $i$ is injective and $u$ is
bijective.
 Further define
$p:B\to B$ by
$$p(a,m) = \left\{ \begin{array}{ll}
(a,0) & \mbox {if} ~~m=0\\
(h(a), 0) & \mbox {if}~~m=1.\end{array}\right.
$$
Then $p$ is an idempotent $(p^2=p)$ and the range of $p$ is $i(A).$ The quadruple  $(B,i,u, p)$ has the property $$i(h(a))= p(u(i(a)))~~\forall a\in A.$$

Let us recall the Sz. Nagy dilation (See \cite{SzF}) which is perhaps the most famous dilation of all. Here we
are mentioning the `isometric dilation'. The isometry can be further dilated to a unitary.

\begin{theorem}
Let $T$ be  a contraction on a Hilbert space ${\mathcal H}$. Then there exists a Hilbert space ${\mathcal K}$ containing ${\mathcal H}$ with
an isometry $V: {\mathcal K}\to {\mathcal K}$ such that
$$T^n = P_{\mathcal H}V^n|_{\mathcal H}~~~\forall n\in {\mathbb Z}_+$$
Furthermore, $({\mathcal K}, V)$ can be chosen such that
$${\mathcal K}= ~~\overline {\mbox{span}}\{ V^nh: h\in {\mathcal H}, n\in {\mathbb Z}_+\},$$
and in such a case the pair $({\mathcal K}, V)$ is unique up to unitary equivalence.

\end{theorem}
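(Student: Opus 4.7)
The plan is to construct an explicit isometric dilation using the defect operator of $T$, to cut it down to a minimal version, and then to prove uniqueness by showing that the inner products among iterates $V^n h$ are intrinsic to $T$.

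First I would set $D_T = (I - T^*T)^{1/2}$ and $\mathcal{D}_T = \overline{\ran\,D_T}$, and take $\K = \H \oplus \ell^2(\mathbb{Z}_+, \mathcal{D}_T)$, identifying $\H$ with $\H\oplus 0$. Define $V:\K\to\K$ by
\[
V(h,\, d_0,\, d_1,\, d_2,\, \ldots) = (Th,\ D_T h,\ d_0,\ d_1,\ \ldots).
\]
The identity $\|Th\|^2 + \|D_T h\|^2 = \|h\|^2$, built into the definition of $D_T$, makes $V$ an isometry, and a one-line induction on $n$ shows that the first coordinate of $V^n(h, 0, 0, \ldots)$ is $T^n h$, yielding $P_\H V^n|_\H = T^n$ for all $n\in\mathbb{Z}_+$.

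To obtain the minimal version, I would set $\K_0 = \overline{\spn}\{V^n h : h\in\H,\, n\in \mathbb{Z}_+\}$. Since $\H\subseteq \K_0$ and $\K_0$ is $V$-invariant by construction, restricting $V$ to $\K_0$ produces an isometric dilation satisfying the spanning property in the statement. (The same rescue works for any isometric dilation of $T$, so existence of the minimal version is automatic once existence is known.)

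The main obstacle, as expected, is uniqueness up to unitary equivalence. Given two minimal isometric dilations $(\K_1, V_1)$ and $(\K_2, V_2)$, the strategy is to define $U$ on finite linear combinations by $U\bigl(\sum_k V_1^{n_k} h_k\bigr) = \sum_k V_2^{n_k} h_k$ and show it is a well-defined isometry with dense range. The crucial calculation is that, for $h, k \in \H$ and $m \geq n \geq 0$,
\[
\la V_i^m h,\, V_i^n k\ra = \la V_i^{m-n} h,\, k\ra = \la P_\H V_i^{m-n} h,\, k\ra = \la T^{m-n} h,\, k\ra,
\]
using $V_i^* V_i = I$ and $k\in\H$; the analogous formula holds with the roles of $m,n$ reversed. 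Since the right-hand side is the same for $i=1$ and $i=2$, $U$ is isometric on the dense subspace, extends to an isometry $\K_1\to\K_2$, and is surjective by minimality of $\K_2$, hence unitary. The intertwining $UV_1 = V_2 U$ and $U|_\H = I_\H$ are immediate from the defining formula, completing the uniqueness claim.
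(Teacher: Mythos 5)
Your proof is correct. Note, however, that the paper offers no proof of this statement to compare against: this is one of the classical operator-theoretic theorems (Sz.\ Nagy's isometric dilation) that the authors explicitly quote without proof, citing \cite{SzF}, since their purpose is only to mimic it in the set-theoretic setting. What you have written is the standard argument: the Sch\"affer-type construction $V(h, d_0, d_1, \ldots) = (Th, D_T h, d_0, d_1, \ldots)$ on $\H \oplus \ell^2(\mathbb{Z}_+, \overline{\ran D_T})$, cutting down to $\K_0 = \overline{\spn}\{V^n h\}$, and uniqueness via the computation $\la V_i^m h, V_i^n k\ra = \la T^{m-n}h, k\ra$ for $m \geq n$, which shows the Gram matrix of the spanning vectors is determined by $T$ alone, so the map $\sum_k V_1^{n_k}h_k \mapsto \sum_k V_2^{n_k}h_k$ is well defined, isometric, and extends to the required unitary. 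All steps check out; the only point worth making explicit is that well-definedness of $U$ (i.e., $\sum_k V_1^{n_k}h_k = 0$ implies $\sum_k V_2^{n_k}h_k = 0$) is exactly the statement that the two expressions have equal norms, which your Gram-matrix identity delivers.
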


Now we want to get a Sz.\@ Nagy dilation for functions. Here is the basic definition. Here and elsewhere for any function $h$, $h^0$ is taken as the
identity function.

\begin{definition}
Let $A$ be a non-empty set and let $h:A\to A$ be a function. An
{\em injective power dilation\/}  (or simply a dilation) of $h$, is a
quadruple $(B, i,v, p)$ where $B$ is a set, $i:A\to B$, $v:B\to B$
are injective and $p:B\to B$ is an idempotent with $p(B)=i(A)$.
Further,
$$i(h^n(a))= pv^n(i(a))~~~\forall a\in A, n\in \mathbb{Z}_+.$$
 Any such dilation is said to be {\em minimal\/}
if $B= \bigcup _{n=0}^{\infty } v^n(i(A))$
Two dilations $(B,i,v,p)$ and $(B',i',v',p')$ are said to be bijectively equivalent if there exists a bijection $u:B\to B'$ such that
$i'=ui,$ $v'=uvu^{-1},$ and $p'= upu^{-1}.$

\end{definition}

\begin{theorem} \label{standard} Every function $h:A\to A$ admits a minimal injective
power dilation.
\end{theorem}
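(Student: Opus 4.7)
The plan is to construct a dilation in the simplest possible way: take $B = A \times \mathbb{Z}_+$, embed $A$ via $i(a) = (a,0)$, shift up the second coordinate with $v$, and let $p$ pull everything back to the zeroth level using powers of $h$. In other words, define
\[
i : A \to B, \quad i(a) = (a,0),
\qquad
v : B \to B, \quad v(a,n) = (a, n+1),
\]
\[
p : B \to B, \quad p(a,n) = (h^n(a), 0).
\]
The intuition is that the ``extra coordinate'' $n$ keeps a record of how many times $v$ has been applied, while $p$ collapses that history by executing the recorded iteration of $h$ and landing back in the copy $i(A) = A \times \{0\}$ of the original set.

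First I would verify the structural requirements. Injectivity of $i$ and of $v$ is immediate from the definitions. For the idempotency of $p$, note that $p(a,n) = (h^n(a), 0)$ and then $p(h^n(a), 0) = (h^0(h^n(a)), 0) = (h^n(a), 0)$ since $h^0$ is the identity; so $p^2 = p$. The range of $p$ is clearly contained in $A \times \{0\} = i(A)$, and it contains $i(A)$ because $p(a,0) = (a,0)$, giving $p(B) = i(A)$.

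Next I would check the dilation identity and minimality. Observe that $v^n(i(a)) = v^n(a,0) = (a,n)$, so
\[
p\bigl(v^n(i(a))\bigr) = p(a,n) = (h^n(a), 0) = i(h^n(a)),
\]
for all $a \in A$ and $n \in \mathbb{Z}_+$. This is exactly the required power-dilation relation. Finally,
\[
\bigcup_{n=0}^\infty v^n(i(A)) = \bigcup_{n=0}^\infty \bigl(A \times \{n\}\bigr) = A \times \mathbb{Z}_+ = B,
\]
so the dilation is minimal.

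Honestly, there is no real obstacle here: the construction is a set-theoretic analogue of the standard Sz.-Nagy ``add orthogonal copies indexed by $\mathbb{Z}_+$'' recipe, with the defect-space machinery replaced by the trivial observation that nothing prevents us from keeping each $(a,n)$ as a distinct point (thus making $v$ automatically injective). The only thing worth emphasizing is why one must take $B = A \times \mathbb{Z}_+$ rather than try to work inside a smaller set: if $h$ fails to be injective, two distinct points $a \neq a'$ can satisfy $h(a) = h(a')$, and for the lifted map $v$ to be injective one has no choice but to separate their $v$-orbits by indexing with $n$. The uniqueness question is not part of the statement, and I would leave it for a separate discussion (presumably developed later via the notion of defect space alluded to in the introduction).
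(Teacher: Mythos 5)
Your construction is identical to the paper's standard dilation ($B=A\times\mathbb{Z}_+$, $i(a)=(a,0)$, $v(a,m)=(a,m+1)$, $p(a,m)=(h^m(a),0)$), and your verification of injectivity, idempotency, the dilation identity, and minimality is correct. The paper leaves these checks to the reader, so your proof simply fills in the same routine details.
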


\begin{proof} The construction is simple. Take
\begin{eqnarray*}
B&=& A\times {\mathbb Z}_+;\\
i(a)&=& (a,0), ~~a\in A;\\
v(a,m)&=& (a,m+1),~~(a,m)\in B;\\
p(a,m)&=& (h^m(a), 0),~~(a,m)\in B.
\end{eqnarray*}

%$B= A\times {\mathbb Z}_+.$ Define $i:A\to B$ by $i(a)= (a,
%0)$ and $h:B\to B$ by $h(a,m)= (a,m+1)~~(a,m)\in B.$ Defining
%$p:B\to B$ by $p(a,m)= (h^m(a), 0)$.
 It is easily verified that $(B,i,v,p)$ is an injective power dilation of $h$ and it is minimal.
\end{proof}

We will call the dilation constructed in this theorem as the {\em
standard dilation\/} of $h$. Here there is no uniqueness statement.
Although the construction gives technically a minimal dilation, it
may not be optimal, for instance when $h$ is injective there is
actually no need to enlarge the space. To mitigate this to some
extent we make the following definition.

\begin{definition} Let $h:A\to A$ be a function. A subset $D$
of $A$ is said to be a {\em defect\/} space for $h$ if $h|_{D^c}$ is
injective (Here $D^c= A\backslash D$ is the complement of $D$). A defect space $D$
is said to be a minimal defect space if there is no proper subset of
$D$ which is a defect space for $h$.
\end{definition}

The defect space measures as to how far the map is from being injective. In this definition, it is important to note that $h$ need not leave
$D^c$ invariant. We observe that if a family $\{ D_j: j\in J\}$ of  defect spaces of $h$, forms
a chain under inclusion, that is, for any $j,k \in J$ either $D_j\subseteq D_k$ or $D_k\subseteq D_j$, then
$\bigcap _{j\in J}D_j$ is also a defect space.  Now it
follows by Zorn's lemma that every function admits a minimal defect space. A minimal
defect space is empty if and only if the function is injective.

Let $h:A\to A$ be a function and let $D$ be a defect space of $h$.
Define $(B_D, i_D, v_D, p_D)$ by taking
\begin{eqnarray*}
B_D & = &(D^c\times \{0 \})\bigcup (D\times {\mathbb Z}_+);\\
i_D(a)&= &(a,0),~~a\in A;\\
v_D(a, m)&= & \left\{
\begin{array}{ll}
    (h(a), 0),& (a,m) \in D^c\times \{0\} ;\\
  (a, m+1),  & (a,m )\in D\times {\mathbb Z}_+;\\
    \end{array}
    \right. \\
    p_D (a,m)&= &(h^m(a), 0),~~(a,m)\in B_D.
    \end{eqnarray*}
 Then it is  seen
that $(B_D, i_D, v_D, p_D)$ is a minimal injective dilation. This is same as the standard dilation when $D=A.$

%Suppose $(B,i,v,p)$ is an injective dilation of $h:A\to A$. Define
%the defect space of this dilation as  $D=D(B,i,v,p)= \{ a\in A:
%v(i(a))\notin i(A)\} .$ \textbf{Complete uniqueness.}

The following definition is motivated by the fact that every minimal isometric dilation $V$ of
a contraction $T$ on a Hilbert space ${\mathcal H}$ leaves ${\mathcal H}^{\perp }$ invariant. It
is well-known that such a property is important while considering dilations of tuples of operators and also
in dilation theory of completely positive maps. This property is called  `coinvariance' in \cite{Ar}, `regular' in \cite{Bh2}  and co-increasing in \cite{ShSk}.
We follow Arveson here, as his terminology looks most appropriate in the present situation.

\begin{definition}
An injective dilation $(B,i,v,p)$  of a function $h:A\to A$ is said to be {\em co-invariant}
if $v( i(A)^c)\subseteq i(A)^c.$
\end{definition}
It may be noted that the dilation $(B_D, i_D, v_D, p_D)$  we constructed above  using a defect space $D$ is co-invariant.
Here is an example of a dilation which is not co-invariant.
\begin{example}
Let $A = \{1 \}$ and consider the function $h = Id_A$. Let $B = \{1, 2 \}$ and define $v : B \rightarrow B$ by $v(1) = 2, v(2) = 1$. Let $i: A \rightarrow B$ be the embedding given by $i(1) = 2$. Define $p : B \rightarrow B$ by $p(1) = p(2) = 2$. Then clearly $p$ is an idempotent with $p(B) = \{2 \} = i(A)$. Further note that $p v^n i(1) = 2 = i(h^n(1))$ for all $n \in \mathbb{Z}_{+}$. Thus, $(B, i, v, p)$ is an injective dilation for $h$ which, obviously, is minimal. But this dilation is not co-invariant as $v(i(A)^c) = v(\{1\}) = \{2 \} = i(A)$.

\end{example}

\begin{theorem}
Let  $(B,i,v,p)$ be a co-invariant injective dilation   of a function $h:A\to A$. Then $D=D(B,i,v,p)= \{ a\in A:
v(i(a))\notin i(A) \} $ is a defect space for $h$. Moreover, $(B, i, v, p)$ is in bijective correspondence with
$(B_D, i_D, v_D, p_D).$

\end{theorem}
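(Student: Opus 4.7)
I would attack the statement in three stages.

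\emph{Stage 1 ($D$ is a defect space).} First I would observe that $p$ acts as the identity on $i(A)$: since $p$ is idempotent with range $i(A)$, every $y=p(x)\in i(A)$ satisfies $p(y)=p^{2}(x)=p(x)=y$. For $a\in D^{c}$, the definition of $D$ gives $v(i(a))\in i(A)$, so the dilation identity at $n=1$ yields
\[
i(h(a)) = p(v(i(a))) = v(i(a)).
\]
Hence $h|_{D^{c}}$ inherits injectivity from $v\circ i$: if $h(a_{1})=h(a_{2})$ with $a_{1},a_{2}\in D^{c}$, then $v(i(a_{1}))=v(i(a_{2}))$, so $a_{1}=a_{2}$.

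\emph{Stage 2 (Normal form for $B$).} I would prove that $B$ is the disjoint union of $i(A)$ with $\{\,v^{m}(i(a)):a\in D,\ m\geq 1\,\}$, the latter parametrization being injective. The key lemma is that $v^{j}(i(a))\in i(A)^{c}$ whenever $a\in D$ and $j\geq 1$, obtained by iterating the co-invariance hypothesis $v(i(A)^{c})\se i(A)^{c}$ starting from $v(i(a))\in i(A)^{c}$. This lemma forces (i) disjointness of the two parts and (ii) uniqueness within the second part: if $v^{m}(i(a))=v^{m'}(i(a'))$ with $m\leq m'$ and $a,a'\in D$, then injectivity of $v$ reduces to $i(a)=v^{m'-m}(i(a'))$; the case $m<m'$ would place $i(a)$ in $i(A)^{c}$, impossible, so $m=m'$ and $a=a'$ by injectivity of $i$. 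Existence for $x\in i(A)^{c}$ comes from minimality: write $x=v^{n}(i(a))$ with $n\geq 1$ and $a\in A$, and walk along $i(a),v(i(a)),\ldots,v^{n}(i(a))=x$ back to the last index $k\in\{0,\ldots,n-1\}$ at which $v^{k}(i(a))\in i(A)$ (such $k$ exists, as $k=0$ works). Writing $v^{k}(i(a))=i(a'')$, the point $v(i(a''))=v^{k+1}(i(a))$ lies in $i(A)^{c}$, so $a''\in D$, and $x=v^{n-k}(i(a''))$ with $n-k\geq 1$.

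\emph{Stage 3 (Bijective equivalence).} With the normal form in hand, define $u:B\to B_{D}$ by $u(i(a))=(a,0)$ for $a\in A$ and $u(v^{m}(i(a)))=(a,m)$ for $a\in D$, $m\geq 1$. Stage 2 makes $u$ a well-defined bijection, and $i_{D}=u\circ i$ is immediate. To verify $v_{D}=uvu^{-1}$, I would split into the three cases of the normal form: on $i(D^{c})$ use $v(i(a))=i(h(a))$ from Stage 1 to match the first branch of $v_{D}$; on $i(D)$ note $v(i(a))=v^{1}(i(a))$ has normal form sending $(a,0)$ to $(a,1)=v_{D}(a,0)$; on $v^{m}(i(a))$ with $a\in D,m\geq 1$ the image $v^{m+1}(i(a))$ corresponds to $(a,m+1)=v_{D}(a,m)$. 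For $p_{D}=upu^{-1}$, use $p|_{i(A)}=\id_{i(A)}$ on the first part and the full dilation identity $p(v^{m}(i(a)))=i(h^{m}(a))$ on the second to match $p_{D}(a,m)=(h^{m}(a),0)$.

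\emph{Main obstacle.} The crux is Stage 2: pinning down the normal form requires ruling out any ``re-entry'' of the $v$-orbit of $i(A)$ back into $i(A)$ after it has once left, which is exactly what co-invariance prevents. Without that hypothesis, elements of $i(A)^{c}$ might be representable as $v^{m}(i(a))$ for multiple pairs $(a,m)$, or even overlap $i(A)$, and the map $u$ would fail to be well-defined.
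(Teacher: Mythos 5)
Your proposal is correct and follows essentially the same route as the paper's proof: the same computation showing $h|_{D^c}$ is injective, the same parametrization $v^m(i(a))\leftrightarrow (a,m)$ (you build $u:B\to B_D$ where the paper builds its inverse $\psi:B_D\to B$), with co-invariance ruling out re-entry into $i(A)$ for injectivity and minimality plus the ``last index in $i(A)$'' argument for surjectivity. The only difference is that you spell out the intertwining verifications that the paper leaves to the reader, and, like the paper, you implicitly use minimality of the dilation, which the theorem statement omits but clearly intends.
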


\begin{proof}
%\textbf{FILL THIS PLEASE}
 Let $a, b \in D^c$ be such that $h(a) = h(b)$. Then $v(i(a)), v(i(b)) \in i(A)$ and hence, $v(i(a)) = p v(i(a)) = i(h(a)) =
i(h(b)) = p v(i(b)) = v(i(b))$ and since, $i, v$ are injective, it follows that $a = b$.

Define a map $\psi : B_D \rightarrow B$ by $\psi((a, m)) = v^m(i(a))$ for $(a, m) \in B_D$. We first show that $\psi$ is a bijection. Let $(a, m), (b, n) \in B_D$ be such that $v^m(i(a)) = v^n(i(b))$. Note that if both $a, b \in D^c$, then we must have that $m = n = 0$. If one of $a, b$ belongs to $D$ while the other one in $D^c$, say $a \in D^c$ and $b \in D$, then, of course, $m = 0$ so that $i(A) \ni i(a) = \psi((a, 0)) = \psi((b, n)) = v^n(i(b))$. Now if $n > 0$, the facts that $b \in D$ and the dilation is co-invariant together imply that $v^n(i(b)) \not\in i(A)$ which leads to a contradiction and thus, in this case, we have that $m = n = 0$. Finally, if $a, b$ both belong to $D$ and if we assume that $m \neq n$, say, without loss of generality $m > n$, then $v^m(i(a)) = v^n(i(b))$ would imply that $v^{m-n}(i(a)) = i(b) \in i(A)$. But, as before, it follows from the facts $a \in D$, co-invariance of the dilation and $m-n >0$ that $v^{m-n}(i(a)) \not\in i(A)$ and thus, we arrive at a contradiction. Thus, from $\psi((a, m)) = \psi((b, n))$ we obtain that $m = n$, from which, using injectivity of $v$ and $i$, it follows at once that $a = b$, proving injectivity of the map $\psi$.

 We now show that $\psi$ is surjective. Let $x \in B$. If $x \in i(A)$, say, $x = i(a)$ for some $a \in A$, then clearly $x = \psi((a, 0))$. If $x \notin i(A)$, we assert that there is $(a, m) \in D \times \mathbb{N}$ such that $\psi((a, m)) = x$. Since, by minimality of the dilation, $B = \cup_{n = 0}^{\infty}v^n(i(A))$, $x = v^n(i(a))$ for some $n > 0, a \in A$. If $a \in D$, then $x = \psi((a, n))$ and we are done. If $a \notin D$, we must have that $n > 1$ and let $k$ be the largest integer, $1 \leq k < n$, such that $v^k(i(a)) \in i(A)$ and $v^{k+1}(i(a)) \notin i(A)$. Let $v^k(i(a)) = i(b)$ where $b \in A$. Clearly, $b \in D$ and $\psi((b, n-k)) = v^{n-k}(i(b)) = v^n(i(a)) = x$. Therefore, $\psi$ is surjective and hence, bijective. It is easy to verify that
\begin{align*}
v \psi = \psi v_D, \ p \psi = \psi p_D, \ \mbox{and} \ \psi i_D = i,
\end{align*}
and we leave these verifications to the reader. Thus  $(B, i, v, p)$ is in bijective correspondence with $(B_D, i_D, v_D, p_D)$.

\end{proof}

A contraction $T$ on a Hilbert space $\mathcal H$ is said to be pure
if $(T^*)^n$ converges in strong operator topology to 0. It is well-known that minimal
isometric dilation of a pure contraction is a shift. We wish to have
a similar theorem in our set up. But we do not seem to have an exact
analogue of pureness. An isometry $V$ is pure if and only if it is a
shift that is $\bigcap _{n=0}^{\infty }V^n(\mathcal H )=\{0\}.$  For
contractions $T$ we know that $\bigcap _{n=0}^{\infty }T^n(\mathcal
H )=\{0\}$, then $T$ is pure, but the converse is not true. For instance if $T=tI$ is a scalar with
$0<t<1$, then $T$ is pure but  $\bigcap _{n=0}^{\infty }T^n(\mathcal
H )={\mathcal H}.$

\begin{theorem}
Let $h:A\to A$ be a function and let $(B,i,v,p)$ be a co-invariant, minimal injective dilation
of $h$. Let $D=\{ a\in A: v(i(a))\notin i(A)\}$ be the associated defect space. Then
$v$ is a shift if and only if $\bigcap _{n=0}^{\infty }h^n (D^c)=\emptyset .$
If $h$ is a function such that $\bigcap _{n=0}^{\infty}h^n
(A)=\emptyset $, then for every (not necessarily co-invariant)
minimal injective dilation $(B,i,v,p)$ of $h$, $v$ is a shift.
\end{theorem}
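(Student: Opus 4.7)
The plan has two parts, one per assertion.

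For the biconditional, by the previous theorem every co-invariant minimal injective dilation $(B,i,v,p)$ is bijectively equivalent to the canonical model $(B_D,i_D,v_D,p_D)$; bijective equivalence preserves the shift property (since $v'^n(B')=u(v^n(B))$ under any intertwining bijection $u$), so it suffices to determine when $\bigcap_n v_D^n(B_D)=\emptyset$. Unpacking $v_D$: on the ``tail'' $D\times\mathbb{Z}_+$ it increments the second coordinate, so the tail is forward-invariant and the second coordinate strictly increases; on the ``base'' $D^c\times\{0\}$ it acts as $(a,0)\mapsto(h(a),0)$, escaping into the tail whenever $h(a)\in D$. Tracking preimages: for $n$ exceeding the second coordinate of the target, any $n$-step preimage of $(a,m)$ must originate from some $(b,0)\in D^c\times\{0\}$ with $h^{n-m}(b)=a$ and $h^k(b)\in D^c$ for all $0\le k<n-m$. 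Thus the shift property of $v_D$ is equivalent to the non-existence of an element of $A$ admitting arbitrarily long $h$-preorbits staying entirely in $D^c$.

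The ($\Leftarrow$) direction follows at once: the existence of such a preorbit yields in particular $a\in h^n(D^c)$ for every $n$, so vacuity of $\bigcap_n h^n(D^c)$ forces $v$ to be a shift. For ($\Rightarrow$) I would, starting from $a\in\bigcap_n h^n(D^c)$ and for each $n$ a witness $b_n\in D^c$ with $h^n(b_n)=a$, attempt a compactness / K\"onig-style argument on the tree of $D^c$-valued preimages of $a$, using injectivity of $h|_{D^c}$ to control branching, in order to extract a single coherent infinite backward chain lying in $D^c$. I expect this to be the main obstacle of the proof, since pointwise, level-by-level existence of $D^c$-preimages does not obviously glue into a single infinite chain in general, and some care will be required to push this implication through.

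For the second assertion, co-invariance is not available; I would instead use minimality directly. Given $x\in\bigcap_n v^n(B)$, minimality writes $x=v^j(i(a))$ for some $j\ge 0$ and $a\in A$, and for each $k\ge 0$ the unique $y_k$ with $v^k(y_k)=x$ (by injectivity of $v$) is again of the form $v^{m_k}(i(b_k))$ by minimality, so $x=v^{k+m_k}(i(b_k))$. Applying $p$ and the intertwining identity $pv^ni=ih^n$,
\begin{equation*}
p(x)=pv^{k+m_k}(i(b_k))=i\bigl(h^{k+m_k}(b_k)\bigr)\in i\bigl(h^k(A)\bigr),\qquad k\ge 0.
\end{equation*}
Hence $i^{-1}(p(x))\in\bigcap_k h^k(A)=\emptyset$, contradicting $p(x)\in i(A)$. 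Therefore $\bigcap_n v^n(B)=\emptyset$ and $v$ is a shift. No obstacle is anticipated beyond bookkeeping.
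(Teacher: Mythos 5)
Your reduction to the model $(B_D,i_D,v_D,p_D)$, your proof of the implication ``$\bigcap_{n}h^n(D^c)=\emptyset\Rightarrow v$ is a shift'', and your proof of the second assertion are all correct, and the latter two coincide with the paper's own arguments. Your analysis of $n$-step $v_D$-preimages is also exactly right: a point of $B_D$ lies in $\bigcap_n v_D^n(B_D)$ precisely when some element of $A$ has an infinite backward $h$-orbit $a_1,a_2,a_3,\dots$ contained in $D^c$ (with $h(a_{k+1})=a_k$); and since $h|_{D^c}$ is injective, the length-$n$ backward orbit inside $D^c$ is unique when it exists, so arbitrarily long such orbits automatically cohere into an infinite one --- no K\"onig-type argument is needed for that characterization.

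The obstacle you flagged in the remaining direction, however, is not something to be pushed through with more care: that implication is false. The condition $a\in h^n(D^c)$ only places the \emph{starting point} of an $n$-step orbit in $D^c$; the intermediate iterates may leave $D^c$, and then $v_D^n(i_D(b))\neq i_D(h^n(b))$. Concretely, let $A=\{a,z\}\cup\{c_{n,k}: n\ge 1,\ 0\le k\le n-1\}$ with $h(c_{n,k})=c_{n,k+1}$ for $k<n-1$, $h(c_{n,n-1})=a$, $h(a)=z$, $h(z)=z$, and take $D=\{z\}\cup\{c_{n,n-1}:n\ge 2\}$. Then $h|_{D^c}$ is injective (indeed $D$ is a minimal defect space), and $a=h^n(c_{n,0})$ with $c_{n,0}\in D^c$ for every $n\ge 1$, so $a\in\bigcap_{n}h^n(D^c)\neq\emptyset$; yet every backward $h$-orbit inside $D^c$ is finite (the only $D^c$-preimage of $a$ is $c_{1,0}$, which has none), so $v_D$ \emph{is} a shift. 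The paper's proof of this direction makes precisely the leap you were suspicious of: from $a_0=h^n(b)$ with $b\in D^c$ it infers $i(a_0)\in v^n(i(D^c))$, which requires all of $b,h(b),\dots,h^{n-1}(b)$ to lie in $D^c$. The correct equivalence is the one your preimage analysis actually establishes: $v$ is a shift if and only if no element of $A$ admits an infinite backward $h$-orbit contained in $D^c$; the condition $\bigcap_n h^n(D^c)=\emptyset$ is sufficient for this but not necessary.
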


\begin{proof}
%\textbf{Fill the proof}
 By the previous theorem, $(B,i,v,p)$ is in bijective correspondence with $(B_D, i_D, v_D, p_D).$
Now if $a_0\in $  $ \bigcap _{n=0}^{\infty }h^n (D^c)$, as $v(i(a))= i(h(a)) $ for $a\in D^c,$
$i(a_0)\in \bigcap _{n=0}^{\infty }v^n i(D^c)\subseteq \bigcap _{n=0}^{\infty }v^n (B)$ Therefore $v$ is not a
shift. Conversely suppose $x\in \bigcap _{n=0}^{\infty }v^n (B)$ or equivalently $x\in \bigcap _{n=0}^{\infty }v_D^n (B_D).$
Recall that $B_D= (D^c\times \{0 \})\bigcup (D\times {\mathbb Z}_+).$  If $x=(a,0)$ with $a\in D^c$, we see that $a\in
\bigcap _{n=0}^{\infty }h^n (D^c)$ and we are done. Now if $x=(a, m)$ for some $a\in D$ and $m\in {\mathbb Z}_+$. For $n\geq 1$,
as $x\in v^{n+m}(B)$, from looking at the action of $v_D$, we see that there exist $a_j\in D^c$ for $n\geq j\geq 1$, such that
$$v_D(a_n, 0)= (a_{n-1}, 0), v_D(a_{n-1},0)=(a_{n-2},0),\ldots , v(a_2, 0)= (a_1, 0)$$
$$v_D(a_1,0)= (a,0), v_D(a,0)=(a, 1), v_D(a,1)=(a,2), \ldots , v_D(a, m-1)= (a,m).$$
Further as $h$ is injective on $D^c$, $a_j\in D^c$ for $n\geq j\geq 1$ are uniquely determined and
$h(a_n)=a_{n-1}, h(a_{n-1})=a_{n-2}, \ldots h(a_2)=a_1, h(a_1)=a.$ In particular $a_1= h^{n-1}(a_n)$.
So we get $a_1\in h^{n-1}(D^c).$ As this holds for every $n$, we get $\bigcap _{n=0}^{\infty }h^n (D^c)\neq\emptyset .$
This proves the first part.

Now for the second part, assume  $\bigcap _{n=0}^{\infty}h^n
(A)=\emptyset $.  Let $x\in \bigcap _{n=0}^{\infty }v^n(B).$ We know that $p(x) \in
i(A).$  As $i$ is injective there exists unique $a\in A$ such that
$p(x)= i(a).$  Now for any $n\in {\mathbb Z}_+$, as $x\in v^n(B)$,
there exists some $b_n\in B$ such that $x=v^n(b_n)$. By minimality
of the dilation,  $b_n$ has the form $b_n= v^k(i(a_n))$ for some
$k\in {\mathbb Z}_+$ and $a_n\in A.$ So $x=v^{n+k}(i(a_n))$. Then by
the dilation property, $i(a)= p(x)= i(h^{n+k}(a_n)).$ Therefore for
every $n$ there exists $k\geq 0$ such that $a\in h^{n+k}(A).$ Hence
$a\in \bigcap _{n=0}^{\infty }h^n(A).$ This contradicts purity of
$h$.
\end{proof}

Here is an example of a dilation which is not co-invariant to illustrate the second part of the previous theorem.

\begin{example}
Let $A= {\mathbb Z}_+$ and let $h:A\to A$ be defined by $h(n)=n+1$. Clearly $h$ is pure. Take $B= {\mathbb Z}_+\times \{0,1\} .$
Define $i(a)=(a,0)$, $v(a,0)=(a,1), v(a,1)=(a+2, 0)$ and $p(a,0)=(a,0), p(a,1)=(a+1,0).$ It is clear that $(B, i, v,p)$ is a
minimal injective dilation of $h$ which is not co-invariant.  Nevertheless  $v$ is a shift with multiplicity $2$.

\end{example}

The intertwining lifting theorem of Sz. Nagy and C. Foias is well-known. The commutant lifting theorem is a special case of this. This result is known to have
several applications in interpolation theory and control theory (\cite{FF},\cite{Pa}).

\begin{theorem} (Intertwining lifting theorem \cite{NaFo})  Let $T_1, T_2$ be contractions on Hilbert spaces
${\mathcal H}_1, {\mathcal H}_2$ respectively. Let $V_1, V_2$ acting
on ${\mathcal K}_1, {\mathcal K}_2$ be minimal isometric dilations
of $T_1, T_2$ respectively. Suppose $S: {\mathcal H}_2\to {\mathcal
H}_1$ is a bounded operator such that $T_1S=ST_2$. Then there exists
a bounded operator $R:{\mathcal K}_2\to {\mathcal K}_1$ such that
$V_1R=RV_2$, $P_{{\mathcal H}_1}R|_{{{\mathcal H}_2}^\perp}=0$,
$P_{{\mathcal H}_1}R|_{{\mathcal H}_2}= S$ and $\|R\|=\|S\|.$
Conversely if $R:{\mathcal K}_2\to {\mathcal K}_1$ is a bounded
operator such that $V_1R=RV_2$, and $P_{{\mathcal
H}_1}R|_{{{\mathcal H}_2}^\perp}=0$,  then $S:{\mathcal H}_2\to
{\mathcal H}_1$ defined by $S= P_{{\mathcal H}_1}R|_{{\mathcal
H}_2}$ satisfies $T_1S=ST_2.$

\end{theorem}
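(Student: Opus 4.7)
My plan is to split into the easy converse and the substantive forward direction. For the converse, assume $R : \mathcal{K}_2 \to \mathcal{K}_1$ satisfies $V_1 R = R V_2$ together with $P_{\mathcal{H}_1} R|_{\mathcal{H}_2^\perp} = 0$, and set $S = P_{\mathcal{H}_1} R|_{\mathcal{H}_2}$. Minimality of an isometric dilation forces $\mathcal{H}_i^\perp$ to be $V_i$-invariant, hence $P_{\mathcal{H}_i} V_i = T_i P_{\mathcal{H}_i}$ on all of $\mathcal{K}_i$. For $h \in \mathcal{H}_2$ this yields
\[
T_1 S h = T_1 P_{\mathcal{H}_1} R h = P_{\mathcal{H}_1} V_1 R h = P_{\mathcal{H}_1} R V_2 h = P_{\mathcal{H}_1} R T_2 h + P_{\mathcal{H}_1} R (V_2 h - T_2 h) = S T_2 h,
\]
since $V_2 h - T_2 h \in \mathcal{H}_2^\perp$ kills the second summand.

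For the forward direction, minimality gives $\mathcal{K}_2 = \overline{\mathrm{span}}\{V_2^n h : n \in \mathbb{Z}_+,\, h \in \mathcal{H}_2\}$, so I would attempt to define
\[
R\Bigl(\sum_{n=0}^{N} V_2^n h_n\Bigr) = \sum_{n=0}^{N} V_1^n S h_n
\]
on the dense algebraic span and extend by continuity. From this formula the intertwining $V_1 R = R V_2$ is immediate, and $R|_{\mathcal{H}_2} = S$ from the $n = 0$ term. Because $V_1^k x - T_1^k x \in \mathcal{H}_1^\perp$ for $x \in \mathcal{H}_1$, the vectors $(V_2^n - T_2^n) h$, which span $\mathcal{H}_2^\perp$, are sent into $\mathcal{H}_1^\perp$ by $R$, so the orthogonality condition $P_{\mathcal{H}_1} R|_{\mathcal{H}_2^\perp} = 0$ comes for free once $R$ exists.

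The real work is simultaneous well-definedness and the norm bound $\|R\| \leq \|S\|$; both reduce to the Gram estimate
\[
\Bigl\|\sum_n V_1^n S h_n\Bigr\|^2 \leq \|S\|^2 \Bigl\|\sum_n V_2^n h_n\Bigr\|^2.
\]
Expanding and using $V_i$-invariance of $\mathcal{H}_i^\perp$, each term $\langle V_i^n x, V_i^m y\rangle$ with $x, y \in \mathcal{H}_i$ collapses to one involving only $T_i$ and $T_i^*$. The iterated intertwining $T_1^k S = S T_2^k$ and its adjoint $S^* (T_1^*)^k = (T_2^*)^k S^*$ then rewrite both sides as quadratic forms in the common data $\langle T_2^{n-m} h_n, h_m \rangle$ and $\langle S^* S\, T_2^{n-m} h_n, h_m \rangle$, so the required inequality becomes a block-Toeplitz positivity $\|S\|^2 M - M' \geq 0$; morally a tensored-up version of $S^* S \leq \|S\|^2 I$.

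This positivity step is what I expect to be the main obstacle, and the cleanest way around it, which I would actually execute, is the one-step extension lemma of Sz.~Nagy and Foias: inductively extend a norm-$\|S\|$ intertwiner from $\bigvee_{k=0}^N V_2^k \mathcal{H}_2$ into $\mathcal{K}_1$ one step at a time, using a Parrott-type choice at each stage to preserve the norm, and then pass to a strong limit. Once $R$ is produced with $\|R\| \leq \|S\|$, the reverse inequality is automatic because $S$ is a compression of $R$, so $\|R\| = \|S\|$.
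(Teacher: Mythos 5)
First, a remark on context: the paper does not prove this statement at all --- it is one of the classical operator-theoretic results quoted from the literature (here \cite{NaFo}) that the paper then mimics in the set-theoretic setting, so there is no in-paper proof to compare against. Your converse direction is correct and standard: co-invariance of $\mathcal{H}_i^\perp$ under a minimal isometric dilation gives $P_{\mathcal{H}_i}V_i = T_iP_{\mathcal{H}_i}$, and the computation goes through. The forward direction, however, has a genuine gap: the central construction $R\bigl(\sum_n V_2^n h_n\bigr) = \sum_n V_1^n Sh_n$ is not merely hard to bound --- it is in general not even well defined, and the Gram estimate $\bigl\|\sum_n V_1^n Sh_n\bigr\|^2 \leq \|S\|^2\bigl\|\sum_n V_2^n h_n\bigr\|^2$ is false. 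Concretely, take $\mathcal{H}_1=\mathcal{H}_2=\mathbb{C}^2$ and $T_1=T_2=S=\left(\begin{smallmatrix}0&1\\0&0\end{smallmatrix}\right)$, so $T_1S=ST_2$ trivially and $\|S\|=1$. In the minimal isometric dilation of $T_2$ one has $V_2e_2=(T_2e_2,D_{T_2}e_2,0,\dots)=e_1$, since $D_{T_2}e_2=0$; thus $V_2e_2 - e_1=0$ while your formula assigns it the image $V_1Se_2-Se_1=V_1e_1-0=V_1e_1\neq 0$. The proposed "block-Toeplitz positivity $\|S\|^2M-M'\geq 0$" is exactly what fails here ($M$ annihilates a vector that $M'$ does not), and this failure is the whole reason the commutant lifting theorem is a nontrivial result rather than "morally a tensored-up version of $S^*S\leq\|S\|^2I$."

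Your closing paragraph names the correct escape route --- the Sz.-Nagy--Foias/Sarason one-step extension with a Parrott-type choice at each stage, followed by a limit --- but it is only named, not executed, and it cannot be grafted onto the preceding construction: the lifting it produces satisfies $P_{\mathcal{H}_1}R = SP_{\mathcal{H}_2}$ but in general $RV_2^nh \neq V_1^nSh$, since $R$ must be allowed to dump carefully chosen components into $\mathcal{H}_1^\perp$. Consequently the claims you derived from the explicit formula (that $V_1R=RV_2$ is "immediate," that $R|_{\mathcal{H}_2}=S$, and that $P_{\mathcal{H}_1}R|_{\mathcal{H}_2^\perp}=0$ "comes for free") all have to be re-proved inside the inductive construction, where they are precisely the constraints the one-step extension lemma is designed to preserve. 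As written, the proof of the forward direction is missing its actual content.
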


Here is an analogue of this theorem.

\begin{theorem}

Let $h_1:A_1\to A_1$ and $h_2:A_2\to A_2$ be two functions. Let
$(B_j, i_j, v_j, p_j)$ be standard dilation of $h_j$ for $j=1,2.$
Suppose $s:A_2\to A_1$ is a function such that $sh_2=h_1s$. Then
there exists a function $r:B_2\to B_1$ such that $rv_2=v_1r,$ $rp_2=p_1r$  and $ri_2=i_1s$.  Conversely if
$r:B_2\to B_1$ is a function satisfying  $rv_2=v_1r$ and $rp_2=p_1r,$ then there exists unique function $s:A_2\to A_1$
satisfying $ri_2=i_1s$ and $sh_2=h_1s.$

\end{theorem}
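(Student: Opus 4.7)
The plan is to exploit that the standard dilations have the explicit form $B_j = A_j \times \mathbb{Z}_+$, $i_j(a)=(a,0)$, $v_j(a,m)=(a,m+1)$, $p_j(a,m)=(h_j^m(a),0)$, and to guess the natural candidate for $r$, namely a levelwise application of $s$.

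For the forward direction, I would simply define $r : B_2 \to B_1$ by $r(a,m) = (s(a),m)$. The three required identities follow by direct computation: $rv_2(a,m)=(s(a),m+1)=v_1r(a,m)$; $ri_2(a)=(s(a),0)=i_1(s(a))$; and for the compression condition,
\begin{align*}
rp_2(a,m) &= r(h_2^m(a),0) = (sh_2^m(a),0) = (h_1^m s(a),0) = p_1(s(a),m) = p_1 r(a,m),
\end{align*}
where the middle equality uses the hypothesis $sh_2=h_1s$ iterated $m$ times.

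For the converse, the key observation is that $p_j$ has range $i_j(A_j)$, so any element fixed by $p_j$ lies in $i_j(A_j)$. Given $r$ with $rv_2=v_1r$ and $rp_2=p_1r$, I would evaluate at $(a,0)=i_2(a)$, using $p_2 i_2 = i_2$, to get
\begin{align*}
ri_2(a) = rp_2 i_2(a) = p_1 r i_2(a),
\end{align*}
which shows $ri_2(a) \in p_1(B_1) = i_1(A_1)$. Since $i_1$ is injective, there exists a unique $s(a) \in A_1$ with $ri_2(a) = i_1(s(a))$, and this defines $s : A_2 \to A_1$ satisfying $ri_2 = i_1 s$. Uniqueness of $s$ is immediate from injectivity of $i_1$.

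It remains to verify $sh_2 = h_1 s$. For this I would use the identity $p_j v_j i_j = i_j h_j$, which is simply the $n=1$ case of the dilation relation. Then
\begin{align*}
i_1 s h_2(a) = r i_2 h_2(a) = r p_2 v_2 i_2(a) = p_1 r v_2 i_2(a) = p_1 v_1 r i_2(a) = p_1 v_1 i_1 s(a) = i_1 h_1 s(a),
\end{align*}
and injectivity of $i_1$ yields $sh_2 = h_1 s$. There is no real obstacle here; the mild subtlety is just recognizing that $rp_2 = p_1 r$ (rather than any compression-type relation) is exactly what forces $r$ to send the embedded copy $i_2(A_2)$ into the embedded copy $i_1(A_1)$, which is what allows $s$ to be extracted.
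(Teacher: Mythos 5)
Your proposal is correct and follows essentially the same route as the paper: the same levelwise definition $r(a,m)=(s(a),m)$ for the forward direction, and the same extraction of $s$ from $rp_2=p_1r$ at level zero followed by evaluating $rp_2v_2$ on $i_2(a)$ in two ways for the converse. The only difference is cosmetic — you phrase the converse via the abstract identities $p_jv_ji_j=i_jh_j$ where the paper computes in coordinates.
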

\begin{proof}
Define $r:B_2\to B_1$ by $r(a,m)= (s(a), m)$ for $(a,m)\in B_2.$ Recall
$v_j(a,m)= (a, m+1)$ for $(a,m)\in B_j$ and $i_j(a)= (a, 0)$, for $j=1,2 $.
 Clearly $rv_1(a,m)= r(a,m+1)= (s(a), m+1)= v_2r(a,m).$ Similarly $rp_2(a,m)=r(h_2^m(a),0)=(sh_2^m(a),0)
=(h_1^ms(a),0)=p_1((s(a), m)=p_1r(a,m)$ and $ri_2(a)=r(a,0)=(s(a),0)=i_1s(a).$

%  Further for
%$a\in A_2$, $ (a , j)\in (i_2(A))^c$ if and only if $j\geq 1$ and
For the converse, consider any $a\in A_2.$ As $rp_2=p_1r$, $rp_2(a,0)\in i_1(A_1).$ As $i_1$ is injective there exists unique $s(a)\in A_1$ such that
$rp_2(a,0)= (s(a), 0)$. This defines $s:A_2\to A_1$  satisfying $ri_2=i_1s.$ Now $rp_2v_2=p_1rv_2=p_1v_1r$ and $rp_2v_2(a,0)=rp_2(a,1)=r(h_2(a),0)=
(sh_2(a),0)$ and $p_1v_1r(a,0)=p_1v_1(s(a),0)=p_1(s(a),1)=(h_1(s(a)),0)$, and hence $sh_2=h_1s.$

\end{proof}

We can generalize this intertwining lifting for dilations with defect spaces as follows.

  \begin{theorem} For each $j = 1, 2$, let $h_j : A_j \rightarrow A_j$ be a function and let $D_j$ be a defect space for $h_j$ and consider the injective minimal dilation $(B_{D_j}, i_{D_j}, v_{D_j}, p_{D_j})$ of $h_j$. Suppose that $s : A_2 \rightarrow A_1$ be a function such that $s(D_2^c) \subseteq D_1^c, s(D_2) \subseteq D_1$, and $s h_2 = h_1 s$. Then there exists a function $r : B_{D_2} \rightarrow B_{D_1}$ such that $r v_{D_2} = v_{D_1} r, \ r p_{D_2} = p_{D_1} r$ and $r i_{D_2} = i_{D_1} s$. Conversely, if $r : B_{D_2} \rightarrow B_{D_1}$ is a function satisfying $r v_{D_2} = v_{D_1} r, \ rp_{D_2} = p_{D_1} r$, then there exists a unique function $s : A_2 \rightarrow A_1$ satisfying $r i_{D_2} = i_{D_1} s$ and $s h_2 = h_1 s$. Moreover, $s(D_2^c) \subseteq D_1^c$.
 	\end{theorem}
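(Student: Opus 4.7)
The forward direction is mechanical: I would define $r : B_{D_2} \to B_{D_1}$ by $r(a, m) := (s(a), m)$. Recalling $B_{D_j} = (D_j^c \times \{0\}) \sqcup (D_j \times \mathbb{Z}_+)$, the hypotheses $s(D_2^c) \subseteq D_1^c$ and $s(D_2) \subseteq D_1$ are arranged precisely so that $r$ lands in $B_{D_1}$. The three identities $r v_{D_2} = v_{D_1} r$, $r p_{D_2} = p_{D_1} r$, and $r i_{D_2} = i_{D_1} s$ then reduce to a case split over the two pieces of $B_{D_2}$, using the piecewise definitions of $v_{D_j}, p_{D_j}$ together with $s h_2 = h_1 s$ (iterated to $s h_2^m = h_1^m s$).

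For the converse, my first step is to extract $s$ from $r$. Applying $r p_{D_2} = p_{D_1} r$ to $(a, 0)$, and using $p_{D_2}(a, 0) = (a, 0)$, gives $r(a, 0) \in p_{D_1}(B_{D_1}) = i_{D_1}(A_1)$; injectivity of $i_{D_1}$ then produces a unique $s(a) \in A_1$ with $r(a, 0) = (s(a), 0)$. This defines $s : A_2 \to A_1$ and immediately yields $r i_{D_2} = i_{D_1} s$ together with uniqueness of $s$.

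The substantive step is showing $s(D_2^c) \subseteq D_1^c$ and $s h_2 = h_1 s$. For $a \in D_2^c$, I would evaluate $r v_{D_2} = v_{D_1} r$ at $(a, 0)$: the left side is $r(h_2(a), 0) = (s h_2(a), 0)$, while the right side $v_{D_1}(s(a), 0)$ is either $(h_1 s(a), 0)$ or $(s(a), 1)$ according as $s(a) \in D_1^c$ or $s(a) \in D_1$. Matching the second coordinates forces $s(a) \in D_1^c$, and matching the first then gives $s h_2(a) = h_1 s(a)$. For $a \in D_2$, I would combine $r v_{D_2} = v_{D_1} r$ at $(a, 0)$ (which produces $r(a, 1) = v_{D_1}(s(a), 0)$) with $r p_{D_2} = p_{D_1} r$ at $(a, 1)$ (whose left side is $r(h_2(a), 0) = (s h_2(a), 0)$). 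This yields $(s h_2(a), 0) = p_{D_1} v_{D_1}(s(a), 0)$, and the latter collapses to $(h_1 s(a), 0)$ in both sub-cases $s(a) \in D_1^c$ and $s(a) \in D_1$ — essentially the dilation relation $p_{D_1} v_{D_1} i_{D_1} = i_{D_1} h_1$.

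The point I anticipate as the main obstacle is the asymmetric nature of the converse. A trivial one-point example (take $A_1 = A_2 = \{\ast\}$ with $h_j$ the identity, $D_2 = A_2$, $D_1 = \emptyset$) shows that $s(D_2) \subseteq D_1$ cannot be recovered from the two intertwining identities alone, so the argument for $s h_2 = h_1 s$ on $D_2$ cannot simply mimic the argument on $D_2^c$. Instead one must route through $p_{D_1}$ to quotient out the unknown location of $s(a)$ within $D_1 \sqcup D_1^c$, which is the only genuinely non-bookkeeping move in the entire proof.
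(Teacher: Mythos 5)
Your proposal is correct and follows essentially the same route as the paper, which proves this result only by the remark ``similar to the proof of the previous theorem'': define $r(a,m)=(s(a),m)$ for the forward direction, extract $s$ from $r\,p_{D_2}(a,0)\in i_{D_1}(A_1)$ for the converse, and then use the intertwining relations composed with $p_{D_1}$ to recover $sh_2=h_1s$. You additionally supply the case analysis over $D_2^c\times\{0\}$ versus $D_2\times\mathbb{Z}_+$ that the paper leaves implicit, and your one-point example correctly explains why the converse can only recover $s(D_2^c)\subseteq D_1^c$ and not $s(D_2)\subseteq D_1$.
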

 	
 \begin{proof} Similar to the
  proof of previous Theorem.
  \end{proof}

  Here is a special case of the famous Sarason's Lemma (See \cite{Sar}, \cite{Sha}).

\begin{theorem} (Sarason's Lemma \cite{Sar}) Let $\mathcal H$ be a closed subspace of a Hilbert space ${\mathcal K}$. Let
$V:{\mathcal K}\to {\mathcal K}$ be a bounded operator. Define $T:{\mathcal H}\to {\mathcal H}$ by
$$T= P_{\mathcal H}V|_{\mathcal H}.$$
Then $V$ is a power dilation of $T$, that is, $T^n =P_{\mathcal H}V^n|_{\mathcal H}$ for all $n\in {\mathbb Z}_+$ if and only if
there exist two closed  subspaces ${\mathcal M}\subseteq {\mathcal N} \subseteq {\mathcal K}$, invariant under $V$ such that
$${\mathcal H}= {\mathcal N}\ominus {\mathcal M}.$$
\end{theorem}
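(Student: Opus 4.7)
The plan is to establish the two implications separately, since the easier (``if'') direction requires only an induction, while the harder (``only if'') direction proceeds by canonically constructing $\mathcal{N}$ and $\mathcal{M}$ from $V$ and $\mathcal{H}$.

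For the ``if'' direction, assume closed $V$-invariant subspaces $\mathcal{M} \se \mathcal{N} \se \mathcal{K}$ are given with $\mathcal{H} = \mathcal{N} \ominus \mathcal{M}$. Given $h \in \mathcal{H}$ and $n \geq 0$, $V$-invariance of $\mathcal{N}$ gives $V^n h \in \mathcal{N}$, so we may decompose uniquely $V^n h = h_n + m_n$ with $h_n \in \mathcal{H}$ and $m_n \in \mathcal{M}$, yielding $P_{\mathcal{H}} V^n h = h_n$. I would then prove $T^n h = h_n$ by induction on $n$: assuming it holds for $n$, write $V h_n = V(V^n h - m_n) = V^{n+1} h - V m_n$; since $\mathcal{M}$ is $V$-invariant, $V m_n \in \mathcal{M}$, so $P_{\mathcal{H}} V h_n = P_{\mathcal{H}} V^{n+1} h = h_{n+1}$, and hence $T^{n+1} h = T h_n = h_{n+1}$.

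For the ``only if'' direction, I would set
\[
\mathcal{N} := \overline{\spn}\{V^n h : n \geq 0,\ h \in \mathcal{H}\}, \qquad \mathcal{M} := \mathcal{N} \ominus \mathcal{H}.
\]
By construction $\mathcal{N}$ is closed, $V$-invariant, and contains $\mathcal{H}$, so $\mathcal{H} = \mathcal{N} \ominus \mathcal{M}$ is automatic. The crux is $V \mathcal{M} \se \mathcal{M}$. Note that each vector $V^n h - T^n h$ (for $n \geq 1$, $h \in \mathcal{H}$) lies in $\mathcal{N}$ and, by the dilation identity $T^n = P_{\mathcal{H}} V^n|_{\mathcal{H}}$, is orthogonal to $\mathcal{H}$; thus it lies in $\mathcal{M}$. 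Together with $\mathcal{H}$ these vectors span $\mathcal{N}$, so $\mathcal{M}$ is their closed linear span. To establish $V \mathcal{M} \se \mathcal{M}$ it then suffices to verify orthogonality to $\mathcal{H}$ on these generators: for any $h' \in \mathcal{H}$,
\[
\la V(V^n h - T^n h), h' \ra = \la V^{n+1} h, h'\ra - \la V T^n h, h'\ra = \la T^{n+1} h, h'\ra - \la T(T^n h), h'\ra = 0,
\]
using $T^{n+1} = P_{\mathcal{H}} V^{n+1}|_{\mathcal{H}}$ for the first term and $T = P_{\mathcal{H}} V|_{\mathcal{H}}$ applied to $T^n h \in \mathcal{H}$ for the second.

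The main obstacle is the orthogonality step establishing $V \mathcal{M} \se \mathcal{M}$: it is precisely here that the full family of hypotheses ``$T^n = P_{\mathcal{H}} V^n|_{\mathcal{H}}$ for every $n \geq 0$'' is used, rather than just the case $n = 1$ which is automatic from the definition of $T$. The non-triviality of Sarason's lemma lies in recognizing that this chain of identities is equivalent to the existence of the semi-invariant wedge $\mathcal{H} = \mathcal{N} \ominus \mathcal{M}$.
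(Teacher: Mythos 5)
The paper does not prove this statement; it is one of the classical operator-theoretic results quoted without proof (the introduction says explicitly that the Hilbert-space theorems being mimicked are stated without proofs). Your argument is correct and is the standard proof of Sarason's lemma: the induction for the ``if'' direction and the choice $\mathcal{N}=\overline{\spn}\{V^nh\}$, $\mathcal{M}=\mathcal{N}\ominus\mathcal{H}$ with the orthogonality computation on the generators $V^nh-T^nh$ for the ``only if'' direction are all sound.
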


Now we present a similar result for injective maps.
\begin{theorem}
Let $v:B\to B$ be an injective function and let $A\subset B$. Suppose $h:A\to A$ is a function such that $h(a)=v(a)$ for $a\in A$ with
$v(a)\in A.$ Suppose $A=A_2\backslash A_1$ where $A_1, A_2$ are invariant under $v$. Then there exists $p:B\to B$ such that $p^2=p$, $p(B)=A$ and
\begin{equation}\label{p}
pv^n(a)= h^n(a)~~\forall n\in {\mathbb Z}_+, a\in A.
\end{equation}

\end{theorem}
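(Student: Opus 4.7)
The plan is to define $p$ by using the $v$-orbit of each element $b \in B$ to pull it back to a point of $A$, and then applying $h$ the appropriate number of times. Explicitly, I will set
\[
p(b) \;=\; h^n(a) \quad \text{whenever } b = v^n(a) \text{ for some } a \in A, \; n \in \mathbb{Z}_+,
\]
and $p(b) = a_0$ for some fixed $a_0 \in A$ otherwise (the edge case $A = \emptyset$ forces $B = \emptyset$ and is vacuous). The required properties then follow quickly: for $a \in A$ we have $p(a) = h^0(a) = a$, so $A \subseteq p(B)$; since $p(b) \in A$ for all $b$, we get $p(B) = A$, and idempotency $p \circ p = p$ is immediate; the identity $p(v^n(a)) = h^n(a)$ holds by construction.

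The main obstacle is well-definedness: I must show that if $v^n(a) = v^m(a')$ with $a, a' \in A$ and, say, $n \geq m$, then $h^n(a) = h^m(a')$. By injectivity of $v$ (and hence of $v^m$), the equation reduces to $v^{n-m}(a) = a'$. The key point is now to upgrade this to the $h$-orbit statement $h^{n-m}(a) = a'$. For this I will prove the following intermediate claim, which is where the hypotheses on $A_1, A_2$ do the work:

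\begin{quote}
\emph{Claim.} If $a \in A$ and $v^k(a) \in A$ for some $k \geq 0$, then $v^j(a) \in A$ for all $0 \leq j \leq k$, and consequently $h^j(a) = v^j(a)$ for all $0 \leq j \leq k$.
\end{quote}

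To prove the claim, note that $A_2$ is $v$-invariant and $a \in A_2$, so every $v^j(a)$ lies in $A_2$. If some $v^j(a)$ with $j < k$ lay in $A_1$, then by $v$-invariance of $A_1$ the whole tail $v^j(a), v^{j+1}(a), \ldots$ would lie in $A_1$, contradicting $v^k(a) \in A = A_2 \setminus A_1$. Hence every $v^j(a)$ lies in $A$, and the compatibility hypothesis $h(x) = v(x)$ whenever $x, v(x) \in A$ then gives $h^j(a) = v^j(a)$ inductively.

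Applying the claim with $k = n-m$ yields $h^{n-m}(a) = v^{n-m}(a) = a'$, and therefore $h^m(a') = h^m(h^{n-m}(a)) = h^n(a)$, completing the well-definedness check. Everything else is routine, so this claim is really the only substantive step in the argument.
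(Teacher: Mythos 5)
Your proposal is correct and follows essentially the same route as the paper: define $p$ on $\bigcup_{n} v^n(A)$ by $p(v^n(a)) = h^n(a)$, reduce well-definedness via injectivity of $v$ to showing that $v^{n-m}(a) = a' \in A$ forces the whole intermediate orbit into $A$ (using $v$-invariance of $A_2$ to stay in $A_2$ and $v$-invariance of $A_1$ to rule out falling into $A_1$), and then extend $p$ to the rest of $B$. Your intermediate Claim is precisely the paper's key step, so there is nothing substantive to add.
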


\begin{proof}
All we need to show is that $p$ defined by (\ref{p}) on $\bigcup _{n\in {\mathbb Z}_+}v^n(A)$  is well-defined. Then it can be arbitrarily extended to some idempotent on $B$.
Suppose $v^{n}(a_1)= v^{n+m}(a_2)$ for some $a_1, a_2$ in $A$ and $n,m \in {\mathbb Z}_+ .$ Then as $v$ is injective, $a_1=v^m(a_2).$ Note that $A\bigcap A_1=\emptyset $.
As $A_1$ is left invariant under $v$, if $v^k(a_2)\in A_1$ for some $k$, $1\leq k\leq m$, then we get a contradiction as $a_1\in A$. Consequently $v^k(a_2)\in A$, and so
$v^k(a_2)= h^k(a_2)$ for $1\leq k\leq m.$ Hence $a_1= h^m(a_2)$ and $h^n(a_1)=h^{n+m}(a_2).$
\end{proof}
The converse of this theorem is false. Here is a simple example to show this.% \textbf{Example: To be filled}.

%\begin{example}
%Let $B = \mathbb{Z}_+$, the set of non-negative integers, and let $v : B \rightarrow B$ be the injective function defined by $v(n) = n+1, n \in B$. Let $A = \mathbb{N}$ %and define $h : A \rightarrow A$ by $h(n) = n+1, n \in \mathbb{N}$. Clearly, for $n \in A, v(n) \in A$ and $h(n) = v(n)$. Further, if we define $p : B \rightarrow B$ by %$p(0) = 1$ and $p(n) = n$ for $n \in \mathbb{N}$, then clearly $p$ is an idempotent with $p(B) = A$ and obviously, $pv^m(n) = h^m(n)$ for every $m \geq 0$ and $n \in %A$. Suppose $A = A_2 \setminus A_1$ for some subsets $A_1, A_2$ of $B$. Note that there is only one choice, namely, $A_2 = B$ and $A_1 = \{0 \}$. But clearly $A_1$ is %not $v$-invariant.
%\end{example}

\begin{example}
Let $B = \mathbb{Z}_+$, the set of non-negative integers, and let $v : B \rightarrow B$ be the injective function defined by $v(0) = 1, v(1) = 0$ and $v(n) = n, n \geq 2$. Let $A = \mathbb{N}$ and let $h : A \rightarrow A$ be the identity function. Clearly, $h(n) = v(n)$ for $n \in A$ with $v(n) \in A$. Further, if we define $p : B \rightarrow B$ by $p(0) = 1$ and $p(n) = n$ for $n \in \mathbb{N}$, then clearly $p$ is an idempotent with $p(B) = A$ and obviously, $pv^m(n) = h^m(n)$ for every $m \geq 0$ and $n \in A$. Suppose $A = A_2 \setminus A_1$ for some subsets $A_1, A_2$ of $B$. Note that there are only two choices, namely, $A_2 = B, A_1 = \{0 \}$ and $A_2 = A, A_1 = \emptyset$. But clearly neither $A$ nor $\{0\}$ is $v$-invariant.

\end{example}

We know that Sz. Nagy dilation allows dilation of contractions to unitaries and not just isometries. In this
context, we have the following result.

\begin{theorem}
Let $h:A\to A$ be a function. Then there exists $(C, i, u, p)$ where $C$ is a set, $i:A\to C$ is injective,
$u:C\to C$ is a bijection, $p:C\to C$ is an idempotent with $p(C)=p(A)$ such that
$$i(h^n(a)) = pu^n(i(a))~~a\in A, n\in {\mathbb Z}_+,$$
and $C= \bigcup_{n\in {\mathbb Z}}u^n((i(A)).$
\end{theorem}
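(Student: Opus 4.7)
The plan is to mimic, in the set-theoretic setting, the classical construction of a unitary dilation of an isometry by passing to a bilateral shift on a product space. Concretely, I will build the quadruple $(C, i, u, p)$ by hand rather than extending the minimal injective dilation of $h$; the latter approach would work too, but the direct construction is cleaner.

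First I would set $C = A \times \mathbb{Z}$ (if $A$ is empty, the claim is trivial and I take $C = \emptyset$). Then $i : A \to C$ is defined by $i(a) = (a, 0)$, which is injective, and $u : C \to C$ by $u(a, m) = (a, m+1)$, which is obviously bijective with inverse $(a, m) \mapsto (a, m-1)$. So $u$ is the bilateral translation with multiplicity $A$, in the sense defined earlier in the paper, and it evidently satisfies $C = \bigcup_{n \in \mathbb{Z}} u^n(i(A))$ because $u^n(i(A)) = A \times \{n\}$.

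Next I need to introduce an idempotent $p$ with $p(C) = i(A)$ such that the dilation identity $i(h^n(a)) = p u^n(i(a))$ holds for all $a \in A$ and $n \in \mathbb{Z}_+$. On the non-negative half $A \times \mathbb{Z}_+$ I would define $p$ exactly as in the standard dilation of Theorem \ref{standard}, namely $p(a, m) = (h^m(a), 0)$. With this choice, $u^n(i(a)) = (a, n)$ lands in the non-negative half, so $p u^n(i(a)) = (h^n(a), 0) = i(h^n(a))$, and the required identity holds.

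The only remaining point is how to extend $p$ to the negative half $A \times \{m : m < 0\}$, and this is where one might worry there is an obstacle. But since the dilation identity is required only for $n \geq 0$, and since $u^n(i(A)) \subseteq A \times \mathbb{Z}_+$ for such $n$, the negative half is never visited by $p u^n \circ i$. I therefore have complete freedom: I fix any base point $a_0 \in A$ and set $p(a, m) = i(a_0)$ for $m < 0$. It is immediate that $p(C) \subseteq A \times \{0\} = i(A)$, and that $p$ is the identity on $A \times \{0\}$ (because $p(a, 0) = (h^0(a), 0) = (a, 0)$), so $p^2 = p$ and $p(C) = i(A)$. All four required properties then hold by construction, completing the argument.
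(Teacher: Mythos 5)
Your construction is correct and is essentially the paper's own proof: the paper also takes $C=A\times\mathbb{Z}$, $i(a)=(a,0)$, $u(a,n)=(a,n+1)$, $p(a,n)=(h^n(a),0)$ for $n\ge 0$, differing only in setting $p(a,n)=(a,0)$ rather than $(a_0,0)$ on the negative half, a choice the paper itself describes as ``rather arbitrary.''
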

\begin{proof}
Take $C= A\times {\mathbb Z}, i(a)= (a,0), u(a,n)=(a, n+1), p(a,n)= (h^n(a), 0)$ for $n\in {\mathbb Z}_+$
and $p(a,n)= (a,0)$ for $n<0.$

\end{proof}
This result is not quite satisfactory as the action of $p$ on $(a,n)$ with $n<0$ is rather arbitrary.

\section{Multivariable dilation theory}

  One would like to dilate commuting contractions on a Hilbert
space to commuting isometries. In this context the following theorem
has received a lot of attention.

\begin{theorem} (Ando Dilation(\cite{Ando}) Let $T_1, T_2$ be contractions on a
Hilbert space $\mathcal H$ satisfying $T_1T_2=T_2T_1.$ Then there
exists a Hilbert space $\mathcal K$ containing ${\mathcal H}$ as a
subspace with a pair of commuting isometries $V_1, V_2$ on $\mathcal
K$ such that
$$T_1^{n_1}T_2^{n_2}= P_{\mathcal H}V_1^{n_1}V_2^{n_2}|_{\mathcal
H}~~~\forall n_1, n_2\in {\mathbb Z}_+.$$
\end{theorem}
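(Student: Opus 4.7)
The plan is to iterate the Sz.-Nagy isometric dilation twice, using the intertwining lifting theorem stated just above to propagate the commutation at each stage. First I would take $(\mathcal{K}_1, V_1)$ to be the minimal Sz.-Nagy isometric dilation of $T_1$, with $\mathcal{H} \subseteq \mathcal{K}_1$; a standard feature of this dilation is co-invariance, $V_1 \mathcal{H}^\perp \subseteq \mathcal{H}^\perp$. Since $T_1 T_2 = T_2 T_1$, the operator $T_2 : \mathcal{H} \to \mathcal{H}$ intertwines $T_1$ with itself, so the intertwining lifting theorem supplies a contraction $\tilde{T}_2 : \mathcal{K}_1 \to \mathcal{K}_1$ satisfying $V_1 \tilde{T}_2 = \tilde{T}_2 V_1$, $P_{\mathcal{H}} \tilde{T}_2|_{\mathcal{H}} = T_2$ and $P_{\mathcal{H}} \tilde{T}_2|_{\mathcal{H}^\perp} = 0$. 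The latter two conditions imply $\tilde{T}_2 \mathcal{H}^\perp \subseteq \mathcal{H}^\perp$, and hence $P_{\mathcal{H}} \tilde{T}_2^{\,n}|_{\mathcal{H}} = T_2^{\,n}$ for every $n$.

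Next I would take $(\mathcal{K}, V_2)$ to be the minimal Sz.-Nagy isometric dilation of $\tilde{T}_2$, with $\mathcal{K}_1 \subseteq \mathcal{K}$ and $V_2 \mathcal{K}_1^\perp \subseteq \mathcal{K}_1^\perp$, and invoke the intertwining lifting theorem a second time. Since $V_1 \tilde{T}_2 = \tilde{T}_2 V_1$, the contraction $V_1 : \mathcal{K}_1 \to \mathcal{K}_1$ lifts to a contraction $\hat{V}_1 : \mathcal{K} \to \mathcal{K}$ such that $V_2 \hat{V}_1 = \hat{V}_1 V_2$, $P_{\mathcal{K}_1} \hat{V}_1|_{\mathcal{K}_1} = V_1$, $P_{\mathcal{K}_1} \hat{V}_1|_{\mathcal{K}_1^\perp} = 0$, and $\|\hat{V}_1\| = \|V_1\| = 1$. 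The candidate commuting isometric dilation of $(T_1,T_2)$ is then the pair $(\hat{V}_1, V_2)$.

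The main obstacle is to promote $\hat{V}_1$ from a contraction that merely compresses to $V_1$ into a genuine isometric extension of $V_1$. For $k \in \mathcal{K}_1$, comparing $P_{\mathcal{K}_1} \hat{V}_1 k = V_1 k$ with $\|\hat{V}_1 k\| \leq \|k\| = \|V_1 k\|$ forces the $\mathcal{K}_1^\perp$-component of $\hat{V}_1 k$ to vanish, so $\hat{V}_1|_{\mathcal{K}_1} = V_1$. Commutation with $V_2$ then yields $\|\hat{V}_1 V_2^n k\| = \|V_2^n V_1 k\| = \|k\| = \|V_2^n k\|$ for every $k \in \mathcal{K}_1$ and $n \geq 0$, and by minimality the vectors $V_2^n k$ span a dense subspace of $\mathcal{K}$; since $\hat{V}_1$ is bounded and norm-preserving on a dense set, it is isometric on all of $\mathcal{K}$. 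Once this is established, the power dilation identity is routine: for $h \in \mathcal{H}$, commutation gives $\hat{V}_1^{n_1} V_2^{n_2} h = V_2^{n_2} V_1^{n_1} h$, co-invariance of $V_2$ for $\mathcal{K}_1$ yields $P_{\mathcal{K}_1} V_2^{n_2} V_1^{n_1} h = \tilde{T}_2^{n_2} V_1^{n_1} h$, and the co-invariances of $V_1$ and $\tilde{T}_2$ for $\mathcal{H}$ collapse $P_{\mathcal{H}} \tilde{T}_2^{n_2} V_1^{n_1} h$ to $T_2^{n_2} T_1^{n_1} h = T_1^{n_1} T_2^{n_2} h$, completing the verification.
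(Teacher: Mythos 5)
This theorem is one of the classical results the paper merely quotes (``we quote the results we are mimicking without giving proofs''), so there is no internal proof to compare against; the paper's own contribution here is only the set-theoretic analogue that follows. Your argument is the well-known derivation of Ando's theorem from the intertwining (commutant) lifting theorem, which is a genuinely different route from And\^{o}'s original direct construction (dilating both contractions simultaneously on a common space and repairing commutativity through a unitary identification of defect spaces); it is not circular provided one remembers that the lifting theorem has proofs independent of Ando's theorem. The skeleton is sound, and your observation that a contraction whose compression to $\mathcal{K}_1$ equals the isometry $V_1$ must actually restrict to $V_1$ on $\mathcal{K}_1$ is exactly right.

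There is, however, a genuine gap at the step where you conclude that $\hat{V}_1$ is an isometry. You establish $\|\hat{V}_1x\|=\|x\|$ only for $x$ in the set $\bigcup_{n\geq 0}V_2^{\,n}\mathcal{K}_1$. Minimality says the \emph{linear span} of this set is dense; the set itself is a union of closed subspaces, is contained in the closed set $\mathcal{K}_1\cup V_2\mathcal{K}$, and is therefore in general not dense, so ``bounded and norm-preserving on a dense set'' is not available. Nor does norm preservation pass from a set to its linear span for a general bounded operator, since that would require preservation of inner products. The standard repair uses contractivity: since $\|\hat{V}_1\|\leq 1$, the operator $D=I-\hat{V}_1^*\hat{V}_1$ is positive, and $\langle Dx,x\rangle=\|x\|^2-\|\hat{V}_1x\|^2=0$ for $x\in V_2^{\,n}\mathcal{K}_1$ forces $D^{1/2}x=0$ and hence $Dx=0$; thus $\ker D$ is a closed subspace containing every $V_2^{\,n}\mathcal{K}_1$, hence their closed linear span, which is $\mathcal{K}$ by minimality, so $D=0$ and $\hat{V}_1$ is an isometry. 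With this one correction the remaining verifications in your last paragraph go through as written.
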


It is well-known that this theorem can not be extended to triples or
to general $d$-tuples with $d\geq 3.$ In contrast, we have the following theorem.

\begin{theorem} Let $J$ be an index set and let $\{ h_j: j\in
J\}$ be a commuting family of functions on a set $A.$ Then there exists a quadruple
$(B, i, (v_j)_{j\in J}, p)$ where $B$ is a set, $i:A\to B$ is an
injective map, $v_j:A\to B$, $j\in J$  is a commuting family of
injective maps, $p:B\to B$ is an idempotent, such that
$$
i(h_{j_1}h_{j_2}\ldots h_{j_k}(a)) = p(v_{j_1}v_{j_2}\ldots
v_{j_k})(i(a)),~~~j_1, j_2, \ldots, j_k\in J, a\in A$$ and $B=
\{v_{j_1}v_{j_2}\ldots v_{j_k}(i(a)): j_1, j_2, \ldots, j_k\in J,
a\in A\}.$
\end{theorem}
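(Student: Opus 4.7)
The plan is to mimic the single-variable standard dilation from Theorem \ref{standard}, replacing the monoid $\mathbb{Z}_+$ by the free commutative monoid $\mathbb{Z}_+^{(J)}$ of finitely supported tuples of non-negative integers indexed by $J$. Write $\mathbf{0}$ for the zero tuple and $e_j$ for the tuple with $1$ in the $j$-th slot and $0$ elsewhere. For $\mathbf{n} = (n_j)_{j\in J} \in \mathbb{Z}_+^{(J)}$, set $h^{\mathbf{n}} := \prod_{j \in J} h_j^{n_j}$; this is well-defined because only finitely many factors differ from the identity and the family $\{h_j : j \in J\}$ commutes, so the order of composition does not matter.

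Concretely, I would set
$$B := A \times \mathbb{Z}_+^{(J)}, \quad i(a) := (a, \mathbf{0}), \quad v_j(a, \mathbf{n}) := (a, \mathbf{n} + e_j), \quad p(a, \mathbf{n}) := (h^{\mathbf{n}}(a), \mathbf{0}).$$
The map $i$ is clearly injective, each $v_j$ is injective since translation in the second coordinate is, and the family $\{v_j\}_{j \in J}$ commutes because addition in $\mathbb{Z}_+^{(J)}$ is commutative. Since $p$ sends all of $B$ into $A \times \{\mathbf{0}\} = i(A)$ and restricts to the identity there (as $h^{\mathbf{0}} = \mathrm{id}_A$), it follows that $p^2 = p$ and $p(B) = i(A)$.

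The dilation identity is then a direct computation: for any $j_1, \ldots, j_k \in J$ and $a \in A$,
$$p(v_{j_1} v_{j_2} \cdots v_{j_k})(i(a)) = p(a, e_{j_1} + \cdots + e_{j_k}) = (h^{e_{j_1} + \cdots + e_{j_k}}(a), \mathbf{0}) = i(h_{j_1} h_{j_2} \cdots h_{j_k}(a)),$$
where the last equality uses commutativity of the $h_j$'s to identify $h^{e_{j_1} + \cdots + e_{j_k}}$ with the composition $h_{j_1} \circ \cdots \circ h_{j_k}$. Minimality is immediate: every $(a, \mathbf{n}) \in B$ equals $v_{j_1} v_{j_2} \cdots v_{j_k}(i(a))$ for any enumeration $(j_1, \ldots, j_k)$ of the indices in the support of $\mathbf{n}$ repeated with the multiplicities prescribed by $\mathbf{n}$.

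The only point requiring any care — and it is more bookkeeping than an obstacle — is verifying that $h^{\mathbf{n}}$ is unambiguously defined and that the reindexing in the dilation identity above is legitimate; both rest solely on the finite support of $\mathbf{n}$ and the pairwise commutation hypothesis. It is worth emphasizing why the set-theoretic analogue succeeds for arbitrary $J$ (even uncountable) while Ando's theorem fails for $d \geq 3$: the obstructions in the Hilbert space setting come from orthogonality and norm constraints on the compression $p$, none of which have any counterpart in our stripped-down framework.
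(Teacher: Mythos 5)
Your construction is exactly the paper's: the paper also takes $B = A\times \mathbb{Z}_{+,0}^J$ (finitely supported functions $J\to\mathbb{Z}_+$), with $i(a)=(a,0)$, $v_j(a,\alpha)=(a,\alpha+\delta_j)$ and $p(a,\alpha)=(h^\alpha(a),0)$, differing from yours only in notation. Your verification of injectivity, commutativity, idempotence and the dilation identity is correct and merely spells out what the paper leaves to the reader.
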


\begin{proof}
Take $B= A\times {\mathbb Z}_{+,0}^{J}$, where ${\mathbb
Z}_{+,0}^{J}$ denotes the space of functions from $J$ to ${\mathbb
Z}_+$ taking 0 as their values except for finitely many points. Then
using multi-index notation, for $\alpha \in {\mathbb Z}_{+,0}^J$,
$h^{\alpha }$ is defined as $h^{\alpha } = \Pi _{j\in J} h_j^{\alpha
(j)}.$  Here the product means composition of powers of $h_j$'s and
since they form a commuting family, the order of composition does not
matter. Define $i:A\to B$ by $i(a) = (a, 0)$ (Here $0$ is the zero
function). For $j\in J$, let $\delta _j$ be the function $\delta
_j(k) = \delta _{jk} .$ Define $v_j:B\to B$ by $v(a, \alpha ) = (a,
\alpha +\delta _j)$ and $p(a, \alpha )= (h^\alpha (a) , 0).$

It is not difficult to verify that $(B, i, (v_j)_{j\in J}, p)$ is a
minimal commuting dilation of $\{ h_j: j\in J\}.$

\end{proof}

In the previous theorem also we have not bothered about optimality.

\begin{definition} A commuting injective dilation $(B, i,(v_j)_{j\in J}, p)$
 of a commuting family of functions $\{h_j: j\in J\}$ on some set
$A$ is said to be {\em co-invariant\/} if every $v_j$ leaves $i(A)^c$
invariant.
\end{definition}

To ensure commutativity of the dilation, we are forced to assume in the following theorem that
$D^c$ is left invariant by all $h_j,j \in J$. This was not the case in one variable. Moreover, now
we do not have any uniqueness result.

\begin{theorem}
Let $\{h_j: j\in J\}$ be a commuting family of functions on a set
$A$. Let $D$ be a  defect space for each of  $h_j$, that is, $h_j|_{D^c}$
is injective for every $j$ and suppose $D^c$ is left invariant by every $h_j, j\in J.$ Define $(B_D, i_D, (v_{j,D})_{j\in
J}, p_D)$ by
\begin{eqnarray*} B_D &=& D^c\times \{0\}\bigcup D\times {\mathbb Z}_{+, 0}^J;\\
  i_D(a)&= &(a,0), ~~a\in A;\\
v_{j,D}(a, \alpha ) &= & \left\{ \begin{array}{ll} (h_j(a),
0), &(a,\alpha ) \in D^c\times \{0\};\\
 (a, \alpha +\delta _j), & (a, \alpha ) \in D\times {\mathbb Z}_{+,0}^J;\end{array}\right.\\
 p(a, \alpha )& =& (h^{\alpha }(a), 0).
\end{eqnarray*}
Then $(B_D, i_D, (v_{j,D})_{j\in
J}, p_D)$ is a co-invariant, minimal, commuting, injective dilation.

\end{theorem}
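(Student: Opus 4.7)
The plan is to verify the six properties one by one, in the order: well-definedness/injectivity of each $v_{j,D}$, commutativity of the family $\{v_{j,D}\}_{j\in J}$, that $p_D$ is an idempotent with $p_D(B_D)=i_D(A)$, the dilation intertwining identity, minimality, and finally co-invariance. Most of these follow by straightforward case analysis on whether elements of $B_D$ lie in $D^c\times\{0\}$ or in $D\times\mathbb{Z}_{+,0}^{J}$. Throughout, the hypotheses $h_j|_{D^c}$ injective and $h_j(D^c)\subseteq D^c$ for all $j\in J$ are precisely what glue the two pieces of $B_D$ together consistently under $v_{j,D}$.

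First I would argue injectivity of $v_{j,D}$. On $D^c\times\{0\}$ the map sends $(a,0)\mapsto(h_j(a),0)$, which lands in $D^c\times\{0\}$ because $D^c$ is $h_j$-invariant, and is injective because $h_j|_{D^c}$ is injective. On $D\times\mathbb{Z}_{+,0}^{J}$ the map $(a,\alpha)\mapsto(a,\alpha+\delta_j)$ is manifestly injective, with image inside $D\times\mathbb{Z}_{+,0}^{J}$. The two image pieces are disjoint, so $v_{j,D}$ is injective. For commutativity I would treat the two regions separately: on $D\times\mathbb{Z}_{+,0}^{J}$ the compositions $v_{j,D}v_{k,D}$ and $v_{k,D}v_{j,D}$ both produce $(a,\alpha+\delta_j+\delta_k)$; on $D^c\times\{0\}$ they both produce $(h_jh_k(a),0)=(h_kh_j(a),0)$, where one uses $h_k(a)\in D^c$ to apply the first-case branch on the second step, and the commutativity of $\{h_j\}$.

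Next, $p_D$ being idempotent is immediate from $p_D(h^{\alpha}(a),0)=(h^{\alpha}(a),0)$, and $p_D(B_D)=A\times\{0\}=i_D(A)$ is clear. For the dilation identity
\begin{equation*}
i_D(h_{j_1}\cdots h_{j_k}(a))\;=\;p_D\,v_{j_1,D}\cdots v_{j_k,D}(i_D(a)),
\end{equation*}
I would split on $a\in D$ versus $a\in D^c$. If $a\in D$, then inductively $v_{j_1,D}\cdots v_{j_k,D}(a,0)=(a,\delta_{j_1}+\cdots+\delta_{j_k})$, and applying $p_D$ gives $(h^{\delta_{j_1}+\cdots+\delta_{j_k}}(a),0)=i_D(h_{j_1}\cdots h_{j_k}(a))$. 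If $a\in D^c$, the invariance of $D^c$ under every $h_j$ guarantees that each intermediate application of some $v_{j_\ell,D}$ stays in the first-case branch, giving directly $(h_{j_1}\cdots h_{j_k}(a),0)$, on which $p_D$ acts as the identity.

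For minimality, any $(a,0)\in D^c\times\{0\}$ equals $i_D(a)$; any $(a,\alpha)$ with $a\in D$ and $\alpha=\sum_\ell \delta_{j_\ell}$ is $v_{j_1,D}\cdots v_{j_k,D}(i_D(a))$. For co-invariance, $i_D(A)^c=\{(a,\alpha)\in B_D:\alpha\neq 0\}\subseteq D\times\mathbb{Z}_{+,0}^{J}$, and $v_{j,D}$ sends $(a,\alpha)\mapsto(a,\alpha+\delta_j)$, which still has nonzero second coordinate. The main obstacle, such as it is, is bookkeeping: checking the dilation identity requires care when a word $v_{j_1,D}\cdots v_{j_k,D}$ acts on $(a,0)$ with $a\in D^c$, and this is exactly where the assumption that $D^c$ be $h_j$-invariant (which was not needed in the one-variable Theorem) is used, both here and in showing commutativity.
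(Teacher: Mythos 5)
Your proposal is correct and follows essentially the same route as the paper's proof: a case analysis on the two pieces $D^c\times\{0\}$ and $D\times{\mathbb Z}_{+,0}^J$ for commutativity, the dilation identity, and co-invariance, using the invariance of $D^c$ under each $h_j$ exactly where the paper does. You supply slightly more detail than the paper on injectivity of the $v_{j,D}$ and on minimality, which the paper asserts without proof; nothing in your argument diverges in substance.
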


\begin{proof}

%\textbf{FiLL THIS PLEASE}
Clearly, $(v_{j, D})_{j \in J}$ is a family of injective maps on $B_D$.
Now we show that this is a commuting family. Let $(a, 0) \in D^c \times \{0 \}$. Then, $v_{i, D} v_{j, D} ((a, 0)) = (h_i h_j(a), 0) = (h_j h_i(a), 0) = v_{j, D} v_{i, D}(a, 0)$. Let $(a, \alpha) \in D \times \mathbb{Z}^J_{+, 0}$. Then $v_{i, D} v_{j, D}((a, \alpha)) = (a, \alpha + \delta_j + \delta_i) = v_{j, D} v_{i, D}((a, \alpha))$. Finally, note that if $a \in D^c$, then $p_D v_{j_1, D} v_{j_2, D} \cdots v_{j_n, D}(i_D(a)) = p_D v_{j_1, D} v_{j_2, D} \cdots v_{j_n, D}((a, 0)) = p_D(h_{j_1} h_{j_2} \cdots h_{j_n}(a), 0) = (h_{j_1} h_{j_2} \cdots h_{j_n}(a), 0) = i_D(h_{j_1} h_{j_2} \cdots h_{j_n}(a))$ and if $a \in D$, then $p_D v_{j_1, D} \cdots $ $v_{j_n, D}(i_D(a)) = p_D v_{j_1, D} v_{j_2, D} \cdots v_{j_n, D}((a, 0)) = p_D(a, \delta_{j_1} + \delta_{j_2} + \cdots + \delta_{j_n}) = (h_{j_1} \cdots h_{j_n}(a), 0) = i_D(h_{j_1} h_{j_2} \cdots h_{j_n}(a))$. Thus, $(B_D, i_D, (v_{j, D})_{j \in J}, p_D)$ is a commuting injective dilation of $(h_j)_{j \in J}$. Clearly, this dilation is minimal. Observe that $i(A)^c = D \times \mathbb{Z}^J_{+, 0} \setminus \{0\}$ and given $(a, \alpha) \in i(A)^c$ (so that $a \in D$), $v_{j, D}(a, \alpha) = (a, \alpha + \delta_j) \in i(A)^c$. Thus, $(B_D, i_D, (v_{j, D})_{j \in J}, p_D)$ is a co-invariant, minimal, commuting injective dilation of $(h_j)_{j \in J}$.

\end{proof}

For non-commuting operator tuples forming a row contraction, the
following dilation to isometries with orthogonal ranges is
well-known.

\begin{theorem} (\cite{Bunce}, \cite{Frazho}, \cite{Popescu} and \cite{Sha})
Fix $d\geq 1$ and let $\mathcal H$ be a Hilbert space. Let
$T_j:\mathcal H\to \mathcal H$ be bounded operators satisfying
$$T_1T_1^*+T_2T_2^*+\cdots +T_dT_d^*\leq I.$$
Then there exists a Hilbert space $\mathcal K$ containing $\mathcal
H$ as a subspace, with isometries $V_j:\mathcal K\to \mathcal K$
satisfying $$V_i^*V_j= \delta _{ij},$$ $V_j^*(\mathcal H)\subseteq
\mathcal H$ for all $j$, and
$$ T_{j_1}T_{j_2}\ldots T_{j_k}= P_{\mathcal H}V_{j_1}V_{j_2}\ldots
V_{j_k}|{\mathcal H}$$ for $1\leq j_1, j_2, \ldots , j_k\leq d$ and
$k\geq 1.$

\end{theorem}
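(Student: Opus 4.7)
The plan is to construct the dilation explicitly on a Hilbert space built from $\H$ and the full Fock space over $\C^d$, following the classical Sch\"affer--Popescu pattern. First, I would package the tuple into the row contraction $\mathbf{T}\colon\H^d\to\H$ given by $\mathbf{T}(h_1,\ldots,h_d)=\sum_j T_j h_j$; the hypothesis $\sum_j T_j T_j^*\le I$ is exactly $\mathbf{T}\mathbf{T}^*\le I$, so $\mathbf{T}$ is a contraction and $D_{\mathbf{T}}:=(I_{\H^d}-\mathbf{T}^*\mathbf{T})^{1/2}$ is a positive contraction on $\H^d$ with closed range $\mathcal D\se\H^d$. The dilation space is then $\K=\H\oplus(\F_d\otimes\mathcal D)$, where $\F_d=\bigoplus_{n\ge 0}(\C^d)^{\otimes n}$ is the full Fock space with vacuum vector $\Omega$ spanning the $n=0$ summand.

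Next I would define each $V_j$ piecewise: on $\H$, set $V_j h=T_j h\oplus(\Omega\otimes D_{\mathbf{T}}\iota_j h)$, where $\iota_j\colon\H\hookrightarrow\H^d$ inserts $h$ in the $j$-th coordinate and zero elsewhere; on the Fock summand, let $V_j$ act by left creation, $V_j(\xi\otimes d)=(e_j\otimes\xi)\otimes d$. The four things to verify are: (a) each $V_j$ is an isometry, which uses $\|D_{\mathbf{T}}\iota_j h\|^2=\|h\|^2-\|T_j h\|^2$ and the fact that the two pieces of $V_j h$ land in orthogonal subspaces of $\K$; (b) the ranges $V_i\K$ and $V_j\K$ are orthogonal for $i\ne j$, the non-trivial part being on $\H$; (c) $V_j^*$ maps $\H$ into $\H$ and acts there as $T_j^*$; and (d) iterating (c) gives $V_{j_k}^*\cdots V_{j_1}^*|_{\H}=T_{j_k}^*\cdots T_{j_1}^*$, whose Hilbert-space adjoint yields the required compression formula $P_{\H}V_{j_1}\cdots V_{j_k}|_{\H}=T_{j_1}\cdots T_{j_k}$.

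The heart of the matter is step (b). On the Fock summand the two ranges sit in orthogonal word subspaces $e_i\otimes(\cdot)$ and $e_j\otimes(\cdot)$, so all the content is on $\H$, where one has to check that $\la T_i h,T_j h'\ra+\la D_{\mathbf{T}}\iota_i h,D_{\mathbf{T}}\iota_j h'\ra=0$ for all $h,h'\in\H$ and $i\ne j$. Expanding the second inner product via $D_{\mathbf{T}}^2=I-\mathbf{T}^*\mathbf{T}$ together with the identity $\mathbf{T}^*\mathbf{T}\iota_i h=(T_k^* T_i h)_{k=1}^d$ produces precisely $-\la T_i h,T_j h'\ra$, giving the desired cancellation. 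The main obstacle is therefore not conceptual but organisational: aligning the direct-sum and tensor bookkeeping so that the single computation of $\la D_{\mathbf{T}}\iota_i h,D_{\mathbf{T}}\iota_j h'\ra$ simultaneously delivers the isometry condition $V_j^*V_j=I$ (by taking $i=j$) and the orthogonality $V_i^*V_j=0$ (by taking $i\ne j$). Once this is in place, the invariance $V_j^*(\H)\se\H$ is immediate from the definition, and the dilation formula for arbitrary words follows from (d).
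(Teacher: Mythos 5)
The paper does not prove this statement: it is one of the classical operator-theoretic results quoted (from Bunce, Frazho, Popescu) purely as motivation for the set-theoretic analogue, so there is no in-paper proof to compare yours against. On its own merits your argument is the standard Frazho--Bunce--Popescu Fock-space construction and it is correct. The row operator $\mathbf{T}\colon \H^d\to\H$ satisfies $\mathbf{T}\mathbf{T}^*=\sum_j T_jT_j^*\le I$, hence is a contraction, and your single identity
\[
\la D_{\mathbf T}\iota_i h,\, D_{\mathbf T}\iota_j h'\ra \;=\; \la \iota_i h,\iota_j h'\ra-\la T_i h,\,T_j h'\ra
\]
does simultaneously give the isometry property (case $i=j$, first term $=\la h,h'\ra$) and orthogonality of the ranges on the $\H$-component (case $i\ne j$, first term $=0$); the cross-terms between $V_i(\H)$ and $V_j$ of the Fock summand vanish for degree reasons since $V_j$ sends words of length $n$ to words of length $n+1$, and the word formula follows by taking adjoints of $V_{j_k}^*\cdots V_{j_1}^*|_{\H}=T_{j_k}^*\cdots T_{j_1}^*$. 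Two minor corrections: $\mathcal D$ should be defined as the \emph{closure} of the range of $D_{\mathbf T}$ (that range need not be closed), and you should state explicitly that $V_j$ so defined on the two summands of $\K=\H\oplus(\F_d\otimes\mathcal D)$ has the property $V_j(\H)\se \H\oplus(\Omega\otimes\mathcal D)$, orthogonal to $V_j(\F_d\otimes\mathcal D)$, so that the piecewise definition really yields a single isometry. It is worth observing that your construction is precisely the linearization of the analogue the authors do prove, namely the non-commuting dilation on $A\times\Gamma^J$ with $\Gamma^J$ the free monoid on $J$: the full Fock space is the Hilbert-space incarnation of that free monoid, with the vacuum playing the role of the empty word.
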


Our analogue of this theorem requires the following definition.

\begin{definition}  Let $J$ be an index set and let $\{h_j: j\in J\}$ be a
family of functions on a  set $A.$ A quadruple $(B, i, (v_j)_{j\in
J}, p)$, is an {\em injective non-commutative dilation\/} of
$\{h_j:j\in J\}$ if   $B$ is a set, $i:A\to B$ is an injective
function, for every $j$, $v_j:B\to B$ is injective such that $v_j(B)
\bigcap v_k(B) =\emptyset $ for $j\neq k$, $p:B\to B$ is an
idempotent with $p(B)=i(A)$ and
$$i(h_{j_1}h_{j_2}\ldots h_{j_k}(a))= p(v_{j_1}v_{j_2}\ldots
v_{j_k})(i(a))$$ for $j_1, j_2,\ldots , j_k$ in $J$, $k\in
\mathbb{N}$ and $a\in A.$ Such a dilation is said to be {\em
minimal\/} if
$$B= \{ v_{j_1}v_{j_2}\ldots v_{j_k}(i(a)): j_1, j_2, \ldots , j_k \in J, k\in \mathbb {Z}_+, a\in A\}. $$
\end{definition}

\begin{theorem} Every family of functions $\{h_j: j\in J\}$ on a  set
$A$ admits a minimal injective non-commuting dilation.
\end{theorem}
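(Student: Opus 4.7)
The plan is to mimic the construction used for the commuting dilation in the previous theorem, replacing the free commutative monoid ${\mathbb Z}_{+,0}^{J}$ by the free (non-commutative) monoid $J^{*}$ on $J$, i.e.\ the set of all finite words $w = j_{1} j_{2} \cdots j_{k}$ in the alphabet $J$, together with the empty word $\epsilon$. I would set $B = A \times J^{*}$ and define
\begin{align*}
i(a) &= (a, \epsilon), \quad a \in A, \\
v_{j}(a, w) &= (a, jw), \quad (a, w) \in B, \\
p(a, \epsilon) &= (a, \epsilon), \quad a \in A, \\
p(a, j_{1} j_{2} \cdots j_{k}) &= (h_{j_{1}} h_{j_{2}} \cdots h_{j_{k}}(a), \epsilon), \quad k \geq 1.
\end{align*}

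Injectivity of $i$ and of each $v_{j}$ is immediate. The required disjointness $v_{j}(B) \cap v_{k}(B) = \emptyset$ for $j \neq k$ reduces to the trivial observation that in the free monoid words beginning with different letters are distinct; this is precisely the reason for using $J^{*}$ rather than its commutative quotient. Since $p(B) \subseteq A \times \{\epsilon\} = i(A)$ and $p$ fixes this subset pointwise, $p$ is idempotent with $p(B) = i(A)$.

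To check the dilation identity I would verify by a short induction on $k$ that
\begin{align*}
v_{j_{1}} v_{j_{2}} \cdots v_{j_{k}}(i(a)) = (a, j_{1} j_{2} \cdots j_{k}),
\end{align*}
so that applying $p$ yields $(h_{j_{1}} h_{j_{2}} \cdots h_{j_{k}}(a), \epsilon) = i(h_{j_{1}} h_{j_{2}} \cdots h_{j_{k}}(a))$, as required. Minimality is then automatic, since every element of $B$ is literally of the form $v_{j_{1}} \cdots v_{j_{k}}(i(a))$, with $k = 0$ giving $i(a)$ itself.

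There is no genuine obstacle here: the absence of any compatibility hypothesis on $\{h_{j}\}$ is exactly what allows the use of the free monoid, and the combinatorics of $J^{*}$ delivers disjointness of ranges and minimality for free. The only point one must be careful about is the convention that $v_{j}$ prepends (rather than appends) the letter $j$; with this convention the order of composition in $v_{j_{1}} \cdots v_{j_{k}}$ agrees with the left-to-right reading of the word $j_{1} \cdots j_{k}$, so that the dilation identity matches the order of the $h_{j}$ on the nose.
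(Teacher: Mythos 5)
Your construction is exactly the paper's: the free monoid $J^{*}$ is the paper's $\Gamma^{J}$ of finite tuples together with the empty tuple $\omega$, and your $v_{j}$ (prepending the letter $j$) and $p$ coincide with the paper's maps. The proposal is correct and takes essentially the same approach.
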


\begin{proof}
Define $$\Gamma ^J= \{ (j_1, j_2, \ldots , j_k):j_1, j_2, \ldots ,
j_k \in J, k\in \mathbb {N}\} \bigcup \{ \omega \}$$ where $\omega $
is the empty tuple or `vacuum'. Take
\begin{eqnarray*}
B& = & A\times \Gamma ^J;\\
i(a)& = & (a, \omega ), a\in A;\\
v_j(a, \omega )& =&  (a, (j) );\\
 v_j(a,(j_1, j_2, \ldots , j_k))& =& (a, (j, j_1, j_2, \ldots , j_k));\\
 p(a, \omega )& =&  (a, \omega );\\
 p(a, (j_1, j_2, \ldots , j_k))&= & (h_{j_1}h_{j_2}\ldots h_{j_k}(a), \omega).\end{eqnarray*}
 Now the verification of claims
made above is easy.
\end{proof}

\begin{definition} Let $\{h_j: j\in J\}$ be a family of functions on
a set $A$. Then a subset $D$ of $A$ is said to be a joint defect
space for $\{h_j: j\in J\}$ if $h_j|_{D^c}$ are injective and
$$h_j(D^c)\bigcap h_k(D^c)= \emptyset ~~\mbox {for} \ j\neq k.$$
\end{definition}

The condition in the previous definition can also be stated as $ H:
J\times A\to A$ defined by $H(j,a)= h_j(a)$ is injective on $J\times
D^c$.

Now suppose $D$ is a joint defect space for $\{ h_j: j\in J\}.$
Consider $(B_D, i_D, (v_{j,D})_{j\in J}, p)$, where $B_D = D^c\times
\{\omega \}\bigcup D\times \Gamma ^J,$ $i_D(a) = (a, \omega ),$
$v_{j,D}(a, \omega)= (h_j(a), \omega)$ if $a\in D^c$ and $v_{j,D}(a,
\omega ) = (a, (j))$ if $a\in D$ and $v_{j,D}(a, (j_1, j_2, \ldots ,
j_k))= (a, (j, j_1, j_2, \ldots , j_k)).$ Also $p(a, \omega )= (a,
\omega )$ and $p(a, (j_1, j_2, \ldots , j_k))= (h_{j_1}h_{j_2}\ldots
h_{j_k}(a), \omega ).$ Then $(B_D, i_D, (v_{j,D})_{j\in J}, p)$ is a
non-commutative injective dilation of $\{h_j: j\in J\}$. We also observe
that if $v_{j,D}(i_D(a))\notin i_D(A)$ then  $v_{k,D}(i_D(a)) \notin i_D(A)$ for every $k \in J$.
This is a property crucial for the next theorem.

%\begin{theorem}\textbf { We have uniqueness under co-invariant property.?}
%See below
%\end{theorem}

\begin{theorem}
 \emph{Let $\{h_j : j \in J \}$ be a family of functions on a set $A$. Let $(B, i, (v_j)_{j \in J}, p)$ be an injective, non-commutative, minimal, co-invariant dilation of $\{h_j : j \in J \}$ and has the property that if for some $a \in A$ and $j \in J$, $v_j(i(a)) \notin i(A)$, then $v_k(i(a)) \notin i(A)$ for every $k \in J$. Set
\begin{align*}
D = \{a \in A : v_j(i(a)) \notin i(A)  \ \mbox{for some} \ j \in J\}.
\end{align*}
Then $D$ is a joint defect space for $\{h_j : j \in J \}$ and $(B_D, i_D, (v_{j, D})_{j \in J}, p_D)$ is bijectively isomorphic to $(B, i, v, p)$.}
\end{theorem}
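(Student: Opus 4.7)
The plan is to follow the template of the single-variable co-invariant theorem proved earlier: first verify that $D$ is a joint defect space, and then exhibit an explicit bijection $\psi : B_D \to B$ intertwining every piece of structure.

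Two preliminary observations streamline everything. Since $p$ is idempotent with $p(B) = i(A)$, it restricts to the identity on $i(A)$. The standing hypothesis is equivalent to $D^c = \{a \in A : v_j(i(a)) \in i(A) \text{ for every } j \in J\}$, so for $a \in D^c$ applying $p$ to $v_j(i(a))$ combined with the dilation identity yields $v_j(i(a)) = i(h_j(a))$. To see that $h_j$ is injective on $D^c$, suppose $a, b \in D^c$ satisfy $h_j(a) = h_j(b)$; then $v_j(i(a)) = v_j(i(b))$ and the injectivity of $v_j$ and $i$ gives $a = b$. For the disjointness across indices, suppose $h_j(a) = h_k(b)$ with $a, b \in D^c$ and $j \neq k$; then $v_j(i(a)) = v_k(i(b))$, contradicting $v_j(B) \cap v_k(B) = \emptyset$.

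For the isomorphism I would define $\psi : B_D \to B$ by $\psi((a, \omega)) = i(a)$ for all $a \in A$, and $\psi((a, (j_1, \ldots, j_k))) = v_{j_1} v_{j_2} \cdots v_{j_k}(i(a))$ for $a \in D$ and $k \geq 1$. Surjectivity follows from minimality: any $x \in B$ is of the form $v_{j_1} \cdots v_{j_k}(i(a))$, and when $a \notin D$ the identity $v_{j_k}(i(a)) = i(h_{j_k}(a))$ shortens the word; iterating, one either reaches an element of $D$ as the new base point or exhausts the word entirely, in either case producing an explicit preimage.

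The main obstacle is injectivity when both inputs lie in $D \times \Gamma^J$. Suppose $\psi((a, (j_1, \ldots, j_m))) = \psi((b, (k_1, \ldots, k_n)))$ with $a, b \in D$ and, without loss of generality, $m \leq n$. If $j_1 \neq k_1$ then $v_{j_1}(B) \cap v_{k_1}(B) = \emptyset$ gives a contradiction, so $j_1 = k_1$; injectivity of $v_{j_1}$ strips the leading letter, and iteration forces $i(a) = v_{k_{m+1}} \cdots v_{k_n}(i(b))$. Since $b \in D$ gives $v_{k_n}(i(b)) \in i(A)^c$, co-invariance propagates membership in $i(A)^c$ up the chain, ruling out $n > m$ and then forcing $a = b$. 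The mixed cases where one input is of the form $(c, \omega)$ are dispatched by the same co-invariance argument. Finally, the identities $\psi i_D = i$, $v_j \psi = \psi v_{j, D}$, and $p \psi = \psi p_D$ are routine verifications on the two types of elements of $B_D$, invoking the dilation identity $p v_{j_1} \cdots v_{j_k} i = i h_{j_1} \cdots h_{j_k}$ together with the relation $v_j i = i h_j$ on $i(D^c)$.
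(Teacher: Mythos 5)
Your proposal is correct and follows essentially the same route as the paper's proof: the same verification that $D$ is a joint defect space (via $pv_j i = ih_j$ on $D^c$ and the disjoint-range condition), the same map $\psi(a,\alpha) = v^{\alpha}(i(a))$, and the same use of co-invariance plus $v_j(B)\cap v_k(B)=\emptyset$ for injectivity and of minimality with word-shortening for surjectivity. No gaps.
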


\begin{proof} Observe that given any $a \in A$, either $v_j(i(a)) \in i(A)$ for every $j \in J$ or $v_j(i(a)) \notin i(A)$ for all $j \in J$. We first show that $D$ is a joint defect space for $\{h_j : j \in J \}$. Let $j \in J$ and let $a, b \in D^c$ be such that $h_j(a) = h_j(b)$. As $a, b \in D^c$, both $v_j(i(a))$ and $v_j(i(b))$ belong to $i(A)$. Thus, $v_j(i(a)) = p v_j(i(a)) = i(h_j(a)) = i(h_j(b)) = p v_j(i(b)) = v_j(i(b))$ and hence, $a = b$. Also for $j \neq k$, $h_j(D^c) \cap h_k(D^c) = \emptyset$ for if $h_j(D^c) \cap h_k(D^c) \neq \emptyset$, say $x \in h_j(D^c) \cap h_k(D^c)$, then $x = h_j(a) = h_k(b)$ for some $a, b \in D^c$ which would imply that $v_j(i(a)) = v_k(i(b))$ and consequently, $v_j(B) \cap v_k(B) \neq \emptyset$, a contradiction. Hence, $h_j(D^c) \cap h_k(D^c) = \emptyset$ for $j, k \in J$ with $j \neq k$.

 Consider the map $\psi : B_D \rightarrow B$ defined by $\psi(a, \omega) = i(a)$ and $\psi(a, \alpha) = v^{\alpha}(i(a))$ for $(a, \alpha) \in B_D$ with $\alpha \neq \omega$. We show that $\psi$ is injective. Let $(a, \alpha), (b, \beta) \in B_D$ be such that $\psi(a, \alpha) = \psi(b, \beta)$. If one of $a, b$, say $a$, belongs to $D$ and the other one, that is, $b$ belongs to $D^c$, then we must have that $\beta = \omega$ and $\alpha \neq \omega$ and hence, $v^{\alpha}(i(a)) = i(b)$. Let $\alpha = (j_1, j_2, \cdots, j_n)$ where $n \geq 1$. As $a \in D$, $v_j(i(a)) \notin i(A)$ for every $j \in J$ and so, in particular, $v_{j_n}(i(a)) \notin i(A)$. Now the co-invariance of the dilation allows us to conclude that $v^{\alpha}(i(a)) \notin i(A)$, leading to a contradiction. Thus, either $a, b \in D$ or $a, b \in D^c$. If both $a$ and $b$ belong to $D^c$, then clearly, $\alpha = \beta = \omega$ from which it follows that $i(a) = i(b)$ and so, $a = b$. Let $a, b \in D$. If $\alpha = \beta = \omega$, then obviously $a = b$. We assert that it can not happen that one of $\alpha, \beta$ is $\omega$ and the other one is different from $\omega$ for if, say $\beta = \omega$ and $\alpha \neq \omega$, then $v^{\alpha}(i(a)) = i(b)$ which is a contradiction since similar argument as before establishes that $v^{\alpha}(i(a)) \notin i(A)$. Let $\alpha = (j_1, j_2, \cdots, j_n)$ and $\beta = (k_1, k_2, \cdots, k_m)$ where $m, n \geq 1$. Then $v_{j_1} \cdots v_{j_n}(i(a)) = v_{k_1} \cdots v_{k_m}(i(b))$. If possible let $n > m$. It follows from $v_j(B) \cap v_k(B) = \emptyset$ for $j \neq k$ that $j_1 = k_1, \cdots, j_m = k_m$ and thus, $i(b) = v_{j_{m+1}} \cdots v_{j_n}(i(a))$, a contradiction. Thus $m = n$ and once again applying the fact that $v_j(B) \cap v_k(B) = \emptyset$ for $j \neq k$ we obtain that $i(a) = i(b)$ so that $a = b$. Thus $\psi$ is injective.

  Next we show that $\psi$ is surjective. Let $ x \in B$. If $x \in i(A)$, say, $x = i(a)$ for some $a \in A$, then $\psi(a, \omega) = i(a) = x$. Assume that $x \notin i(A)$. It follows from from minimality of the dilation that $x = v_{j_1} v_{j_2} \cdots v_{j_n}(i(a))$ where $n \geq 1, a \in A$ and $j_1, \cdots, j_n \in J$.
 Clearly, if $a \in D$, then $\psi((a, (j_1, j_2, \cdots, j_n))) = x$. If $a \in D^c$, $v_{j_n}(i(a)) \in i(A)$ and let $k$ be the smallest positive integer, $1 < k \leq n$, such that $v_{j_k} v_{j_{k+1}} \cdots v_{j_n}(i(a)) \in i(A)$ and $v_{j_{k-1}} v_{j_k} \cdots v_{j_n}(i(a)) \notin i(A)$. Let $v_{j_k} v_{j_{k+1}} \cdots v_{j_n}(i(a)) = i(b)$ where $b \in A$. As $v_{j_{k-1}}(i(b)) \notin i(A)$, $b \in D$ and $\psi((b, (j_1, \cdots, j_{k-1}))) = v_{j_1} \cdots v_{j_{k-1}}(i(b)) = x$. Therefore, $\psi$ is surjective and hence, bijective.

 One can easily verify that
 \begin{align*}
 \psi v_{j, D} = v_j \psi, \  \psi p_D = p \psi, \ \mbox{and} \ \psi i_D = i, \ \mbox{for every} \ j \in J
 \end{align*}
 and consequently, $(B, i, v, p)$ is bijectively isomorphic to $(B_D, i_D, (v_{j, D})_{j \in J}, p_D)$.

\end{proof}

\section{Berger, Coburn and Lebow Theorem}

        A theorem of Berger, Coburn and Lebow (\cite{BCL}) describes the structure of two commuting isometries.  We follow the
        exposition of Maji, Sarkar and Sankar \cite{MSS}. For a recent non-trivial application of this theorem see Bhattacharyya, Kumar and Sau \cite{TB}. The result is as
follows.

Let $V_1, V_2$ be two commuting isometries on a Hilbert space
${\mathcal H}$ and $V=V_1V_2$. Then by Wold decomposition of $V$,
${\mathcal H}= {\mathcal H}_0\oplus {\mathcal H}_1$ decomposing $V$
as $V=V|_{{\mathcal H}_0}\oplus V|_{{\mathcal H}_1}$, where
$V|_{{\mathcal H}_1}$ is a unitary and $V|_{{\mathcal H}_0}$ is a
shift with some multiplicity. So up to unitary isomorphism
${\mathcal H}_0= {\mathcal H}^2 \otimes {\mathcal M}$, and $V|_{{\mathcal H}_0}=
M_z\otimes I_{\mathcal D}$, where $M_z$ is the standard shift
isometry on the Hardy space and ${\mathcal M}$ is a multiplicity
space.

It is not hard to see that ${\mathcal H}_0$ and ${\mathcal H}_1$
reduce $V_1, V_2$ and so they decompose say as $V_1=V_{10}\oplus
V_{11}$ and $V_2=V_{20}\oplus V_{21}$. Of course, this may not be
Wold decomposition of $V_1, V_2$. However $V_{11}$ and $V_{21}$ are
commuting unitaries. Further, $V_{10}$ and $V_{20}$ are commuting
isometries related by a formula as below.

\begin{theorem} (BCL Theorem \cite{BCL})   Under the set up as above, there exists a
projection $P$ on ${\mathcal D}$ and a unitary $U$ on ${\mathcal D}$
such that $V_{10}= S\otimes U^*P+ I\otimes U^*(I-P)$ and $V_{20}=
I\otimes PU+S\otimes (I-P)U.$ Conversely, any pair of a projection
$P$ and a unitary $U$ on $\mathcal D$ would give a commuting pair of
isometries by this formula.
\end{theorem}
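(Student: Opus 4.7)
The plan is to reduce to the pure shift summand $\mathcal H_0$ and there analyze the two commuting multipliers whose product is the scalar shift; the unitary summand $\mathcal H_1$ contributes only a pair of commuting unitaries, of no structural interest for this result. Under the identification $\mathcal H_0 \cong H^2(\mathcal D)$ sending $V|_{\mathcal H_0}$ to $M_z = S \otimes I_{\mathcal D}$, the classical fact that every bounded operator on $H^2(\mathcal D)$ commuting with $M_z$ is multiplication by a bounded analytic operator-valued function yields $V_{10} = M_{\Phi_1}$ and $V_{20} = M_{\Phi_2}$, each $\Phi_j$ being inner (equivalent to the isometry of $V_{j0}$). The intertwining $V_{10} V_{20} = M_z$ then reads
\begin{equation*}
\Phi_1(z)\Phi_2(z) = z I_{\mathcal D}, \qquad z \in \mathbb D.
\end{equation*}

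The crux of the argument, and the step I expect to be the main obstacle, is a rigidity lemma forcing $\Phi_1$ and $\Phi_2$ to be degree-one polynomials in $z$. The key observation is that although each $\Phi_j$ is only inner a priori (boundary values isometric a.e.), their product $e^{i\theta} I_{\mathcal D}$ is unitary, so each $\Phi_j(e^{i\theta})$ must also be surjective and hence unitary a.e.\ on the circle. Consequently $\Phi_2(e^{i\theta}) = e^{i\theta}\Phi_1(e^{i\theta})^*$. Expanding $\Phi_1(z) = \sum_{k \ge 0} A_k z^k$, the function $e^{i\theta}\Phi_1(e^{i\theta})^*$ has Fourier coefficient $A_k^*$ at frequency $1-k$; demanding that $\Phi_2$ be analytic, i.e.\ that these vanish for $k \ge 2$, forces $A_k = 0$ for all $k \ge 2$. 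Therefore
\begin{equation*}
\Phi_1(z) = A_0 + zA_1, \qquad \Phi_2(z) = A_1^* + zA_0^*.
\end{equation*}

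To extract the projection and the unitary, I would unpack the unitarity of $\Phi_1(e^{i\theta}) = A_0 + e^{i\theta}A_1$ into the four algebraic identities $A_0^*A_0 + A_1^*A_1 = I$, $A_0A_0^* + A_1A_1^* = I$, $A_0^*A_1 = 0$, and $A_0A_1^* = 0$. Setting $U := (A_0 + A_1)^*$, these identities collapse to $U^*U = UU^* = I$, so $U$ is unitary on $\mathcal D$; setting $P := A_1^*A_1$, the orthogonality conditions force $P^2 = P$, and the identifications $A_0 = U^*(I-P)$ and $A_1 = U^*P$ are then direct. In the paper's tensor notation this is exactly $V_{10} = S \otimes U^*P + I \otimes U^*(I-P)$, and the companion formula for $V_{20}$ follows from $B_0 = A_1^* = PU$ and $B_1 = A_0^* = (I-P)U$. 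The converse is a routine verification: the two prescribed multipliers commute, have unitary boundary values, and multiply out to $zI_{\mathcal D}$.
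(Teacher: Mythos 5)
Your argument is correct and complete: reducing to $\mathcal H_0\cong H^2(\mathcal D)$, representing $V_{10},V_{20}$ as multipliers in the commutant of $M_z\otimes I_{\mathcal D}$, using unitarity of the boundary values of the product to get $\Phi_2=e^{i\theta}\Phi_1^*$, and reading off the Fourier rigidity $\Phi_1(z)=A_0+zA_1$ is the standard analytic proof of BCL, and your algebra extracting $U=(A_0+A_1)^*$ and $P=A_1^*A_1$ (with $A_1A_1^*A_1=A_1$ following from the four identities, so $P$ is indeed a projection) checks out, as does the converse computation. Note, however, that the paper does not prove this statement at all --- it is one of the operator-theoretic results it explicitly quotes without proof --- so the only meaningful comparison is with the paper's proof of its set-theoretic analogue (Theorem 3.2). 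That argument takes a genuinely different, ``coordinate-free'' route: it works directly with the wandering set $W=A\setminus v(A)$ of the product, decomposes it two ways as $W=W_1\sqcup v_1(W_2)=v_2(W_1)\sqcup W_2$, and defines a bijection $u$ of $W$ from this matching; the splitting of $W$ is the combinatorial shadow of your projection $P$ acting on $\mathcal D=\ker V^*$, and $u$ is the shadow of your unitary $U$. Your route buys the explicit formula and an effortless converse via symbol multiplication, but it leans on the $H^2$ model and the commutant lifting of $M_z$, which have no counterpart in the set-theoretic framework; the wandering-subspace route is what survives the passage to sets and is also what the paper extends to $n$-tuples at the end of Section 3.
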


 Let $A$ be a non-empty set and let $v:A\to A$ be an
injective map.  Suppose $v$ factorizes as $v=v_1v_2=v_2v_1$ where $v_i:A\to
A$ are injective for $i=1,2.$ Consider the Wold type decomposition of $v$. So take
$A_1= \bigcap _{n = 0}^{\infty }v^n(A)$ and $A_0=A_1^c.$ Using commutativity of $v_1, v_2$, if $a=v^n(a_1)$,
 then $v_1(a)=v^n(a_2)$ where $a_2=v_1(a_1).$ This shows that $A_1$ is invariant under $v_1$. Now if $v_1(a_1)=v^{n+1}(a_2)$
 then $a_1= v^n(a_2)$ where $a_2=v_2(a_1).$ This shows that if $v_1(a_1)\in A_1$, then $a_1\in A_1$. So $A_0$ is also invariant under $v_1.$
 Similarly $A_0$ and $A_1$ are also invariant under $v_2$. Now $v|_{A_1}$ is a bijection. So $A_1=v(A_1)= v_1v_2(A_1)\subseteq v_1(A_1)\subseteq A_1$. Hence
 $v_1,v_2$ are surjective on $A_1$. Consequently, $v_1, v_2$ on $A_1$ are commuting bijections. So, to understand the structure of $v$ it suffices to consider
 $v|_{A_0}$. Hence, without loss of generality here after we assume that $v$ is a shift.

  Take $W= A\backslash v(A)$. As observed before $W$ is a wandering
subset for $v.$  Similarly take $W_i=A\backslash v_i(A)$ for
$i=1,2.$ Now if
 $a\in W$, then either $a\in W_1$ or $a=v_1(a_1)$ for some $a_1$. But then $a_1$ has to be in $W_2$, otherwise we would have $a\in v_1v_2(A).$
Arguing this way from the commutativity of $v_1, v_2$ we get
$$W= W_1\sqcup v_1(W_2)=v_2(W_1)\sqcup W_2.$$
Define $u: W\to W$, that is, from $W_1\sqcup v_1(W_2)$ to
$v_2(W_1)\sqcup W_2$, by $u(w_1)= v_2(w_1)$ and $u(v_1(w_2))= w_2$
for $w_1\in W_1, w_2\in W_2.$ It is easily seen that $u$ is a
bijection with inverse $u^{-1}$ given by $u^{-1}(v_2(w_1))= w_1$ and
$u^{-1}(w_2)= v_1(w_2).$

 From Wold type decomposition, as $v$ is a shift,  $A= \sqcup
_{n=0}^{\infty} v^n(W)= W\sqcup v(W)\sqcup v^2(W)\sqcup \cdots .$
For $n\geq 1$ we extend the definition of $u$ to $v^n(W)$ by taking
$u(v^n(w))= v^n(u(w))$ for any $w\in W.$ Then $u:A\to A$ is a
bijection commuting with $v$. Take $C_0= \sqcup_{n=0}^{\infty }
v^n(W_2)$ and $C_1= \sqcup _{n=0}^{\infty }v^n(v_2(W_1))$, so that
$A= C_0\sqcup C_1$

Moreover, $v_1:A\to A$ is given by %\textbf{(Verify)}
$$v_1(x) = \left\{ \begin{array}{cl}
u^{-1}(x) & \mbox {if} ~~x\in C_0\\
u^{-1}(v(x)) & \mbox {if}~~x\in C_1.\end{array}\right.
$$
Similarly, $v_2:A\to A$ is given by
$$v_2(x) = \left\{ \begin{array}{cl}
u(x) & \mbox {if} ~~x\in u^{-1}(C_1)\\
u(v(x)) & \mbox {if}~~x\in u^{-1}(C_0).\end{array}\right.
$$
This can be written using  unilateral shift as follows. Take
$B=W\times {\mathbb Z}_+$. Let  $1\times s_+:B\to B$  be the canonical unilateral shift
with wandering space $W$ (identified
with $W\times {0}$). Let $g:A\to B$ defined by $g(v^n(w)) = (w, n)$ for
$w\in W$ and $n\in {\mathbb Z}_+$ is a bijection such that $v=
g^{-1} (1 \times s_+) g.$ Take $s_i= gv_ig^{-1}$ for $i=1,2$. Then
$$s_1(w,n)= \left\{ \begin{array}{ll}(u^{-1}(w),n) & (w,n)\in W_2\times {\mathbb Z}_+;\\
(u^{-1}(w), n+1) & (w,n)\in v_2(W_1)\times {\mathbb Z}_+;\end{array}\right. $$
and
$$s_2(w,n)=  \left\{\begin{array}{ll} (u(w),n) & (w,n)\in W_1\times {\mathbb Z}_+;\\
(u(w), n+1) & (w,n)\in v_1(W_2)\times {\mathbb Z}_+.\end{array}\right. $$
Recall that $W$ decomposes as $W=W_2\sqcup v_2(W_1)$, and  $u:W\to W$ is a bijection such that $u(W_1)=v_2(W_1)=W_2^c.$ In other words we have the following theorem.

\begin{theorem}\label{bcl}
With notation as above, $(A,v,v_1,v_2)$ is in bijective correspondence with $(W\times {\mathbb Z}_+, 1_W\times s_+, s_1, s_2)$ where,
$$s_1 = (u^{-1}\times ~\mbox{id.}~)|_{W_2\times {\mathbb Z}_+}+(u^{-1}\times s_+)|_{W_2^c\times {\mathbb Z}_+};$$
$$s_2= (u\times ~\mbox{id.}~)|_{u^{-1}(W_2^c)\times {\mathbb Z}_+}+(u\times s)|_{u^{-1}(W_2)\times {\mathbb Z}_+}.$$
\end{theorem}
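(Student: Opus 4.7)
My plan is to transport the explicit formulas for $v_1$ and $v_2$ worked out in the discussion immediately preceding the theorem over to the standard shift model via the bijection $g$. The starting point is the Wold-type decomposition: because $v$ is a shift with wandering set $W$, the map $g:A\to W\times\mathbb{Z}_+$ defined by $g(v^n(w))=(w,n)$ is a well-defined bijection, and by construction $g\circ v=(1_W\times s_+)\circ g$. What remains is to compute $s_i:=g\circ v_i\circ g^{-1}$ piecewise on $W\times\mathbb{Z}_+$.

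The key reduction is to compute $v_i$ only on the wandering set $W$. Since $v=v_1v_2=v_2v_1$, each $v_i$ commutes with $v$, so $v_i(v^n(w))=v^n(v_i(w))$, and it suffices to express $v_i(w)$ for $w\in W$ in the form $v^k(w')$ with $w'\in W$ and $k\in\{0,1\}$. Using the two disjoint decompositions
\[
W=W_1\sqcup v_1(W_2)=v_2(W_1)\sqcup W_2
\]
together with the explicit definition of $u$, I would carry out the following case analysis. If $w\in W_2$, then by the definition $u^{-1}(w)=v_1(w)\in v_1(W_2)\subseteq W$, so $v_1(w)=u^{-1}(w)$. If $w\in v_2(W_1)=W_2^c$, write $w=v_2(w_1)$ with $w_1\in W_1$; then $v_1(w)=v_1v_2(w_1)=v(w_1)$, and since $u(w_1)=v_2(w_1)=w$ gives $w_1=u^{-1}(w)$, we obtain $v_1(w)=v(u^{-1}(w))$. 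Symmetrically, noting that $W_1=u^{-1}(W_2^c)$ and $v_1(W_2)=u^{-1}(W_2)$, one gets $v_2(w)=u(w)$ on $W_1$ and $v_2(w)=v(u(w))$ on $v_1(W_2)$.

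Conjugating by $g$ then yields the stated formulas: for $(w,n)\in W_2\times\mathbb{Z}_+$ one computes $s_1(w,n)=g(v^n(u^{-1}(w)))=(u^{-1}(w),n)$, whereas for $(w,n)\in W_2^c\times\mathbb{Z}_+$ one gets $s_1(w,n)=g(v^{n+1}(u^{-1}(w)))=(u^{-1}(w),n+1)$, and the $s_2$ computation is entirely parallel. Injectivity of $s_1,s_2$, bijectivity of $u$, and commutation with $1_W\times s_+$ all transfer automatically from the corresponding properties on $A$. I expect the main conceptual obstacle to already be resolved in the preamble to the theorem, namely the proof that the two decompositions of $W$ hold and that $u$ is a well-defined bijection of $W$; once those are in place the proof is a bookkeeping verification, and the only thing to double-check is the identification $W_1=u^{-1}(W_2^c)$, $v_1(W_2)=u^{-1}(W_2)$ so that the formula for $s_2$ matches the one stated.
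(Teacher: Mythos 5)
Your proposal is correct and follows essentially the same route as the paper, whose entire argument for Theorem \ref{bcl} is in fact carried out in the discussion immediately preceding the statement: establish $W=W_1\sqcup v_1(W_2)=v_2(W_1)\sqcup W_2$, define the bijection $u$ and extend it to $A$ by commutation with $v$, express $v_1,v_2$ piecewise in terms of $u^{\pm 1}$ and $v$, and conjugate by $g$. Your case analysis and the identifications $W_1=u^{-1}(W_2^c)$, $v_1(W_2)=u^{-1}(W_2)$ match the paper's computation exactly.
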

Now we extend our BCL type theorem to  families of maps.
Let $\{ v_j:j\in \{1, 2, \ldots n\} \}$ be a commuting family of
injective maps on a set $A$. Assume that $v=v_1v_2\ldots v_n$ is a
shift. Take $W=A \setminus v(A)$ and $W_j= A \setminus v_j(A)$ for
$j=1,2, \ldots , n.$ Let $S_n$ denote the group of  permutations of $\{ 1,2,
\ldots ,n\}.$ Now for any  $\sigma \in S_n$, $W$ has a decomposition given by
$$W=W_{\sigma (1)}\sqcup v_{\sigma(1)}(W_{\sigma(2)})\sqcup v_{\sigma(1)}v_{\sigma (2)}(W_{\sigma (3)}) \sqcup \cdots
\sqcup v_{\sigma (1)}v_{\sigma (2)}\cdots v_{\sigma (n-1)}(W_{\sigma
	(n)}),$$ and we use the notation $W(\sigma)$ to denote this decomposition of $W$. For $\tau, \sigma \in S_n$, let $u_{\tau}^{\sigma}$ denote the bijection from $W(\sigma)$ to  $W(\tau)$ defined as follows:
\begin{align*}
u_{\tau}^{\sigma}(v_{\sigma (1)}v_{\sigma (2)}\cdots v_{\sigma
	(k-1)}(w))= v_{\tau(1)}v_{\tau(2)}\ldots v_{\tau (r-1)} (w)
\end{align*} for
$w\in W_{\sigma (k)}$, with $r$ being chosen such that $\tau
(r)=\sigma (k)$.
For $k = 0, 1, \cdots, n-1$, let $\sigma_k \in S_n$ be given by
\[\sigma_k(j) = \begin{cases}
k+j, \ \mbox{for} \ 1 \leq j \leq n-k,\\
k-n+j, \ \mbox{for} \ n-k+1 \leq j \leq n.
\end{cases}\] and we consider the family $\{u^{\sigma_k}_{\sigma_{k-1}} : 1 \leq k \leq n \}$ of bijections of $W$ where the notation $\sigma_n$ stands for $\sigma_0$. Using these bijections, we can describe the maps $v_k$ ($1 \leq k \leq n$) on $W$
as follows:
\[v_k = \begin{cases}
u^{\sigma_k}_{\sigma_{k-1}}, \ \mbox{on} \ W \setminus \prod_{i=1, i \neq k}^{n}v_i(W_k),\\
v u^{\sigma_k}_{\sigma_{k-1}}, \ \mbox{on} \ \prod_{i=1, i \neq k}^{n}v_i(W_k).  	
\end{cases}\]
Now, as before, using the fact that $A = \sqcup_{m = 0}^{\infty} v^m(W)$, we may extend the bijections $u^{\sigma_k}_{\sigma_{k-1}}$ of $W$ to bijections of $A$ by setting $u^{\sigma_k}_{\sigma_{k-1}}(v^m w) =  v^m(u^{\sigma_k}_{\sigma_{k-1}} w)$ for $w \in W$ and $m \geq 0$. Consequently,
\[v_k = \begin{cases}
u^{\sigma_k}_{\sigma_{k-1}}, \ \mbox{on} \ \sqcup_{m=0}^{\infty}v^m(W \setminus \prod_{i=1, i \neq j}^{n}v_i(W_k)),\\
v u^{\sigma_k}_{\sigma_{k-1}}, \ \mbox{on} \ \sqcup_{m=0}^{\infty}v^m(\prod_{i=1, i \neq j}^{n}v_i(W_k)).  	
\end{cases}\]
Note that as before, $g^{-1} (1 \times s_+) g = v$. Set $s_k = g v_k g^{-1}$ for $k = 1, 2, \cdots, n$.
We then have the following result.
\begin{theorem}
	With the same notations as in Theorem \ref{bcl}, $(A, v, v_1, v_2, \cdots, v_n)$ is in bijective correspondence with $(W \times \mathbb{Z}_+, 1_W \times s_+, s_1, s_2, \cdots, s_n)$ where
	\begin{align*}
	s_k = (u^{\sigma_k}_{\sigma_{k-1}} \times id.)|_{W_k^{\prime \prime} \times \mathbb{Z}_+} + (u^{\sigma_k}_{\sigma_{k-1}} \times s_+)|_{W_k^{\prime} \times \mathbb{Z}_+}, \ 1 \leq k \leq n;
	\end{align*}
	and $W_k^{\prime} = \prod_{i=1, i \neq k}^n v_i(W_k) = W \setminus (u^{\sigma_{k-1}}_{\sigma_k}(W(\sigma_{k-1})\setminus W_k)), W_k^{\prime \prime} = W \setminus W_k^{\prime}$.	
\end{theorem}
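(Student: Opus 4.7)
The plan is to follow the skeleton of the two-variable argument preceding Theorem \ref{bcl}: use a Wold-type decomposition to conjugate $v$ to the unilateral shift on $W \times \mathbb{Z}_+$, describe each $v_k$ on the wandering set $W$, and then extend by commutativity. Since $v$ is a shift, $W = A \setminus v(A)$ is wandering for $v$ and $A = \sqcup_{m=0}^{\infty} v^m(W)$, so $g(v^m(w)) = (w,m)$ is a bijection $A \to W \times \mathbb{Z}_+$ conjugating $v$ to $1_W \times s_+$. Because each $v_k$ commutes with $v$, the map $s_k = g v_k g^{-1}$ commutes with $1_W \times s_+$ and hence is determined by its values on $W \times \{0\}$; so the whole theorem reduces to describing $v_k|_W$.

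For each $\sigma \in S_n$, iterating the identity $A = W_j \sqcup v_j(A)$ with $j = \sigma(1), \sigma(2), \ldots, \sigma(n)$ and using $v_{\sigma(1)} \cdots v_{\sigma(n)} = v$ (by commutativity) gives $A = W(\sigma) \sqcup v(A)$, hence $W = W(\sigma)$; unique factorisation in each decomposition then makes the bijections $u^{\sigma}_{\tau}$ of $W$ (and their lifts to $A$ via $u^{\sigma}_{\tau}(v^m w) = v^m u^{\sigma}_{\tau}(w)$) well-defined. The central algebraic observation is a dichotomy on $w \in W$: by injectivity of $v_k$ together with the factorization $v = v_k \prod_{i \neq k} v_i$, we have $v_k(w) \in v(A)$ iff $w \in \prod_{i \neq k} v_i(A)$, and intersecting with $W$ gives $w \in W_k'$. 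Thus $v_k$ sends $W_k''$ into $W$ and sends $W_k'$ into $v(W)$; in the latter case, writing $w = v_{k+1} \cdots v_{k-1}(w_k)$ with $w_k \in W_k$ (cyclic indices, reordered using commutativity) yields $v_k(w) = v(w_k)$.

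It remains to match $v_k|_W$ with the claimed formulas. For $j < n$, an element $v_{\sigma_k(1)} \cdots v_{\sigma_k(j-1)}(w')$ of the $j$-th piece of $W(\sigma_k)$, with $w' \in W_{\sigma_k(j)}$, is sent by $u^{\sigma_k}_{\sigma_{k-1}}$ to $v_{\sigma_{k-1}(1)} \cdots v_{\sigma_{k-1}(j)}(w')$, which (using $\sigma_{k-1}(1) = k$ and $\sigma_{k-1}(r+1) = \sigma_k(r)$ for $1 \leq r \leq n-1$) is exactly $v_k$ applied to the original element. These pieces partition $W_k''$, so $u^{\sigma_k}_{\sigma_{k-1}} = v_k$ on $W_k''$. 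On the remaining piece $W_k' = v_{k+1} \cdots v_{k-1}(W_k)$, $u^{\sigma_k}_{\sigma_{k-1}}$ strips the entire prefix and returns $w_k$, hence $v \cdot u^{\sigma_k}_{\sigma_{k-1}}(w) = v(w_k) = v_k(w)$. Transferring through $g$ yields the stated formula on $W \times \{0\}$, and commutation of $s_k$ with $1_W \times s_+$ promotes it to all of $W \times \mathbb{Z}_+$. The main obstacle is purely the cyclic-index bookkeeping: carefully verifying that $u^{\sigma_k}_{\sigma_{k-1}}$ acts as premultiplication by $v_k$ on the first $n-1$ pieces of $W(\sigma_k)$ and as the left-inverse of $v_{k+1} \cdots v_{k-1}$ on the last piece, so that the two-part description of $s_k$ matches the pieces $W_k''$ and $W_k'$ exactly.
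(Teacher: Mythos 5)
Your proposal is correct and follows essentially the same route as the paper, which proves this theorem only implicitly through the construction preceding it: conjugating $v$ to $1_W\times s_+$ via $g(v^m(w))=(w,m)$, identifying $W=W(\sigma)$ for each $\sigma$, and checking that $u^{\sigma_k}_{\sigma_{k-1}}$ agrees with $v_k$ on the first $n-1$ pieces of $W(\sigma_k)$ and with the left inverse of $\prod_{i\neq k}v_i$ on the last piece $W_k'$. You in fact supply more detail than the paper does (the dichotomy $v_k(w)\in v(A)\iff w\in\prod_{i\neq k}v_i(A)$ and the index bookkeeping $\sigma_{k-1}(1)=k$, $\sigma_{k-1}(r+1)=\sigma_k(r)$), the only unverified fragment being the alternative description $W_k'=W\setminus\bigl(u^{\sigma_{k-1}}_{\sigma_k}(W(\sigma_{k-1})\setminus W_k)\bigr)$, which follows from the same computation.
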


\section{Generalizatons and variations}

We considered dilations of $\{h^n: n\in {\mathbb Z}_+\} $ for a map $h:A\to A.$ Now instead of ${\mathbb Z}_+$ we can
consider ${\mathbb R}_+$ or more general monoids. Dilation theory on general monoids have been considered by many authors, see for instance \cite{ShSk}. Recall that a monoid is a set $S$ with an associative binary operation (say `$.$')
and an identity element (say $1$). The associative operation need not be abelian.  It seems eminently feasible to extend most of what we did in previous sections to this setting.
Here is the standard dilation in this setting.

\begin{theorem}
Let $S$ be a left cancellative monoid. Suppose $A$ is a set and $\{ h_s  : s\in S\} $  is  a family of functions such that
$h_{s.t}=h_s\circ h_t$ for all $s,t\in S$ and $h_1=~\mbox{id.}.$  Then there exists a quadruple $(B,i, \{v_s:s\in S\}, p)$
where $B$ is a set, $i:A\to B$, $v_s:B\to B$ are injective functions, $v_{s.t}=v_s\circ v_t$ for all $s,t $ in $S$, $v_1=~\mbox{id.}$,
$p:B\to B$ is idempotent with $p(B)=i(A)$ such that $$p(v_s(i(a)))=i(h_s(a)) ~~\forall s\in S, a\in A$$
and $B=\bigcup _{s\in S}\{ v_s(i(a)): s\in S, a\in A\}.$
\end{theorem}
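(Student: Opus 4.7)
The plan is to mimic Theorem \ref{standard} almost verbatim, replacing ${\mathbb Z}_+$ by the monoid $S$. Concretely, I would set
\begin{eqnarray*}
B &=& A\times S;\\
i(a) &=& (a,1),\quad a\in A;\\
v_s(a,t) &=& (a, s.t),\quad (a,t)\in B,\ s\in S;\\
p(a,s) &=& (h_s(a), 1),\quad (a,s)\in B.
\end{eqnarray*}
Every required property then reduces to a single axiom of the input data, and the whole proof is an unpacking of the definitions.

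First, I would check injectivity of each $v_s$: if $v_s(a,t)=v_s(a',t')$ then $a=a'$ and $s.t=s.t'$, so left cancellativity of $S$ forces $t=t'$. This is exactly where the left cancellative hypothesis is used and is the only slightly substantive point. Next, the semigroup property $v_{s.t}=v_s\circ v_t$ reduces to associativity of the monoid operation, since $v_s(v_t(a,r))=v_s(a,t.r)=(a,s.(t.r))=(a,(s.t).r)=v_{s.t}(a,r)$; and $v_1=\mathrm{id}$ is immediate from $1.t=t$.

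For the idempotent, using $h_1=\mathrm{id}$ and the semigroup law $h_{1.s}=h_1\circ h_s=h_s$, I would compute $p(p(a,s))=p(h_s(a),1)=(h_1(h_s(a)),1)=(h_s(a),1)=p(a,s)$. The range of $p$ equals $\{(h_s(a),1):s\in S,\ a\in A\}$, and since $h_1=\mathrm{id}$ this contains every $(a,1)=i(a)$, while conversely $(h_s(a),1)=i(h_s(a))\in i(A)$; hence $p(B)=i(A)$. The dilation property is direct: $p(v_s(i(a)))=p(v_s(a,1))=p(a,s.1)=p(a,s)=(h_s(a),1)=i(h_s(a))$.

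Finally, for minimality, every $(a,s)\in B$ can be written as $v_s(i(a))=v_s(a,1)=(a,s.1)=(a,s)$, so $B=\{v_s(i(a)):s\in S,\ a\in A\}$. There is no real obstacle; the only place where the hypothesis matters in a non-trivial way is the injectivity of $v_s$, which is tight with left cancellativity. If one dropped left cancellativity, one could still carry out the construction but would have to replace $A\times S$ by a quotient that records orbits of left multiplication, losing the clean semigroup action; this indicates that left cancellativity is the correct minimal assumption for the direct generalization of Theorem \ref{standard}.
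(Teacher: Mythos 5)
Your construction is exactly the paper's: $B=A\times S$, $i(a)=(a,1)$, $v_s(a,t)=(a,s.t)$, $p(a,s)=(h_s(a),1)$, with left cancellativity invoked precisely for the injectivity of the $v_s$. The paper states this without spelling out the verifications; your unpacking of them is correct, so the proposal matches the paper's proof in every essential respect.
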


\begin{proof}
Take $B=A\times S$. Define $i:A\to B$ by $i(a)= (a,1)$, $v_s(a,t)= (a, s.t)$ and $p(a,s)= (h_s(a), 1).$ The left cancellative property of the monoid $S$
ensures that $v_s$ are isometries.
\end{proof}

 Instead of working with sets and functions, we can try to develop
the dilation theory working with vector spaces and  linear maps. Now bounded operators on Hilbert spaces gets replaced by arbitrary linear maps, isometries by injective linear maps and unitaries by bijective linear maps. Direct sums of Hilbert spaces gets replaced by direct sums of vector spaces.
Here is a formal definition and a sample result.

\begin{definition} Let $A$ be a vector space and let $h:A\to A$ be a
linear map. A quadruple $(B, i, v, p)$ is said to be a minimal
injective linear dilation of $h$ if $B$ is a vector space, $i:A\to B$ is an
injective linear map, $v:B\to B$ is an injective linear map, $p:B\to
B$ is an idempotent linear map with $p(B)=i(A)$, satisfying
$$i(h^n(a)) = p(v^n(i(a))~~~\forall n\in {\mathbb Z }_+, a\in A.$$
Such a dilation is said to be minimal if $B=~
\mbox{span}~\{v^n(i(a)): n\in {\mathbb Z}_+, a\in A\}.$
\end{definition}

\begin{theorem}
Every linear map $h:A\to A$ on a vector space admits a minimal
injective linear dilation.
\end{theorem}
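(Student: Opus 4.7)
The plan is to mimic the standard dilation of Theorem \ref{standard}, replacing the disjoint union $A\times\mathbb{Z}_+$ with the algebraic direct sum $\bigoplus_{n\in\mathbb{Z}_+}A$ of countably many copies of $A$, and replacing the idempotent $(a,m)\mapsto (h^m(a),0)$ with its unique linear extension. Concretely, I would set
\[
B \;=\; \bigoplus_{n=0}^{\infty} A,
\]
whose elements are finitely supported sequences $(a_0,a_1,a_2,\ldots)$ with each $a_n\in A$, and then define the three maps by
\[
i(a)=(a,0,0,\ldots),\qquad v(a_0,a_1,a_2,\ldots)=(0,a_0,a_1,a_2,\ldots),
\]
\[
p(a_0,a_1,a_2,\ldots)\;=\;\Bigl(\sum_{n=0}^{\infty} h^n(a_n),\,0,\,0,\ldots\Bigr),
\]
where the sum in the definition of $p$ is finite since only finitely many $a_n$ are nonzero.

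Next I would verify all four conditions of the definition. Linearity of $i$, $v$, $p$ is immediate from the formulas. Injectivity of $i$ is trivial; $v$ is injective because $v(a_0,a_1,\ldots)=0$ forces all coordinates to vanish. For $p$, its range is visibly contained in $i(A)$, and conversely $p(i(a))=(h^0(a),0,\ldots)=i(a)$ since $h^0=\mathrm{id}$; this simultaneously gives $p(B)=i(A)$ and $p^2=p$. For the dilation identity, $v^n(i(a))$ is the sequence with $a$ in slot $n$ and zeros elsewhere, so $p(v^n(i(a)))=(h^n(a),0,\ldots)=i(h^n(a))$. Finally, since the vectors $\{v^n(i(a)): a\in A,\,n\in\mathbb{Z}_+\}$ include, for every $n$ and every $a$, the ``elementary'' element with $a$ in slot $n$, their span is all of $B$, giving minimality.

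There is essentially no obstacle: the set-theoretic construction transports verbatim to the linear category, with the algebraic direct sum playing the role of the disjoint union. The only point requiring a mild adjustment is the idempotent $p$. In the set-level proof, the formula $(a,m)\mapsto(h^m(a),0)$ is applied separately on each ``level'' $A\times\{m\}$; linearity forces us to extend this to arbitrary finite sums of level-vectors, which is exactly the sum $\sum_n h^n(a_n)$ that appears above. Once this choice is made, idempotence and the compression identity follow by the same one-line computations as in the set-theoretic case.
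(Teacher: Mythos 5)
Your construction is identical to the paper's: the algebraic direct sum $\bigoplus_{n=0}^{\infty}A$ is exactly the space $A^{\mathbb{Z}_+}_0$ of finitely supported functions used there, and your $i$, $v$, $p$ (in particular $p(b)=\sum_n i(h^n(b(n)))$) coincide with the paper's maps. The verifications you supply are correct, so this is the same proof, just written out in slightly more detail.
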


\begin{proof}

Take $B= A^{{\mathbb Z}_+}_0$, the space of functions from ${\mathbb
Z} _+$ to $A$ which take value $0$ at all but finitely many points. It is a
vector space under natural linear operations. Define $i:A\to B$ by
$i(a)(0) = a$ and $i(a)(n)=0$ for $n\neq 0.$ Define $v:B\to B$ by
$v(b)(0)=0$ and $v(b)(n)=b(n-1)$ for $n\geq 1.$ Finally define
$p:B\to B$ by $p(b)= \sum _{n=0}^{\infty}ih^n(b(n)).$ This map $p$ is
well defined as $b(n)=0$ for all but finitely many $n$. It is easy
to see that $(B,i,v,p)$ is a minimal injective linear dilation of $h.$

\end{proof}

We observe that the construction of dilation here is similar to the standard dilation of functions (See Theorem \ref{standard}), however there are certain differences.
Now the addition operation of vector spaces plays a non-trivial role.

 \noindent\textbf{Acknowledgements.} The first author thanks J C
    Bose Fellowship of SERB (India) for financial support. The other two authors are supported by the  NBHM (India) post-doctoral fellowship and the Indian Statistical Institute. We thank T. Bhattacharyya for some useful discussions on the topic.


\begin{thebibliography}{00}

\bibitem{Ar} W. Arveson, Noncommutative dynamics and E–semigroups, Monographs in Mathematics,
Springer, 2003.


\bibitem{Ando} T. And\^{o}, On a pair of commutative contractions, Acta Sci. Math. (Szeged)
24 (1963), 88–90.

 \bibitem{BCL} C. A. Berger, L. A.  Coburn, A.  Lebow, Representation and index theory for
C*-algebras generated by commuting isometries. \emph {J. Funct.
Anal.} 27 (1978), no. 1, 51-99. MR0467392.% (57 $\sharp $ 7251)

\bibitem{Bh1}  B. V. Rajarama  Bhat,  An index theory for quantum dynamical semigroups, Trans. Amer.Math.
Soc. 348 (1996), 561–583.

\bibitem{Bh2} Bhat, B. V. Rajarama, Atomic dilations, Advances in quantum dynamics (G.L. Price, B .M. Baker,
P.E.T. Jorgensen, and P.S. Muhly, eds.), Contemporary Mathematics, no. 335, American
Mathematical Society, 2003, pp. 99–107.

\bibitem{TB} Tirthankar Bhattacharyya, Poornendu Kumar, Haripada Sau, Distinguished varieties through the Berger--Coburn--Lebow Theorem, 	arXiv:2001.01410.

\bibitem{Bunce} J.W. Bunce, Models for n-tuples of noncommuting operators,\emph{ J. Funct. Anal.}
57 (1984), 21–30.

\bibitem{FF} C. Foias and A.E. Frazho, The Commutant Lifting Approach to Interpolation Problems, Birkhauser
Verlag, Basel, 1990.

\bibitem{Frazho}  A.E. Frazho, Models for noncommuting operators, \emph{J. Funct. Anal.} 48 (1982),
1–11.



\bibitem{Hal} P. R. Halmos,  Normal dilations and extensions of operators,\emph{ Summa Brasil.
Math.} 2 (1950), 125–134.


\bibitem{MSS} Amit Maji, Jaydeb Sarkar, T. R.  Sankar,  Pairs of commuting
isometries, I. Studia Math. 248 (2019), no. 2, 171-189, MR3953109.

\bibitem{NaFo}  Béla Sz.-Nagy, Ciprian Foiaş,  The "lifting theorem'' for intertwining operators and some new applications. Indiana Univ. Math. J. 20 (1971), no. 10, 901–904. MR0417815.

\bibitem{Pa} Jonathan R. Partington,  Linear operators and linear systems. An analytical approach to control theory. London Mathematical Society Student Texts, 60. Cambridge University Press, Cambridge, 2004. x+166 pp. ISBN: 0-521-54619-2. MR2158502.

\bibitem{Popescu} G. Popescu, Isometric dilations for infinite sequences of noncommuting operators, Trans. Amer. Math. Soc. 316 (1989), 523–536.

\bibitem{Sar} D. Sarason, On spectral sets having connected complement, Acta Sci. Math. (Szeged) 26 (1965), 289-299.

\bibitem{Sha} Orr Shalit, Dilation theory: a guided tour, arXiv:2002.05596.

\bibitem{ShSk} Orr Shalit, Michael Skeide, CP-semigroups and dilations, subproduct systems and superproduct systems: The multi-parameter case and beyond,
	arXiv:2003.05166.


\bibitem{SzF}  B\'{e}la Sz.-Nagy, Ciprian Foias, Hari Bercovici, L\'{a}szl\'{o} K\'{e}rchy,
Harmonic analysis of operators on Hilbert space.
Second edition. Universitext. Springer, New York, 2010. xiv+474 pp. ISBN: 978-1-4419-6093-1. MR2760647 (2012b:47001)

\end{thebibliography}
\end{document}